\documentclass[12pt,reqno,twoside]{amsart}

\usepackage[T1]{fontenc}
\usepackage[utf8]{inputenc}
\usepackage{lmodern}
\usepackage{microtype}
\usepackage[a4paper,inner=1in,outer=1in,top=1.2in,bottom=1.2in]{geometry}

\usepackage{setspace}
\usepackage{amsmath,amssymb,amsthm,mathtools}
\usepackage{upgreek}
\usepackage{mathrsfs}
\usepackage{dsfont}

\usepackage{graphicx,tikz,caption,subcaption}
\usepackage{enumitem}

\usepackage[colorlinks=true,linkcolor=blue!50!black,citecolor=green!40!black,urlcolor=magenta!60!black]{hyperref}
\usepackage[nameinlink,capitalize]{cleveref}

\numberwithin{equation}{section}

\allowdisplaybreaks
\theoremstyle{plain}
\newtheorem{Th}{Theorem}[section]
\newtheorem{Lem}[Th]{Lemma}

\theoremstyle{definition}
\newtheorem{Def}[Th]{Definition}
\newtheorem{Rem}[Th]{Remark}

\DeclareMathOperator{\supp}{supp}

\begin{document}

\title[Semilinear Heat Inequalities in an Exterior Geodesic Domain on $\mathbb{S}^N$]
{Semilinear Heat Inequalities with a Hardy-Type Potential in an Exterior Geodesic Domain on $\mathbb{S}^N$}

\author[M. Jleli]{Mohamed Jleli}
\address{(M. Jleli) Department of Mathematics, College of Science, King Saud University, Riyadh 11451, Saudi Arabia}
\email{jleli@ksu.edu.sa}

\author[B. Samet]{Bessem Samet}
\address{(B. Samet) Department of Mathematics, College of Science, King Saud University, Riyadh 11451, Saudi Arabia}
\email{bsamet@ksu.edu.sa}

\keywords{Semilinear heat inequalities; Hardy-type potentials; unit sphere; exterior geodesic domains; nonexistence;  critical exponent}

\renewcommand{\subjclassname}{\textbf{2020 Mathematics Subject Classification}}
\subjclass[2020]{35K58, 35B33, 35B44; 35R01}

\begin{abstract}
We study an inhomogeneous semilinear heat inequality on the unit sphere \(\mathbb S^N\), \(N\ge3\), in an exterior geodesic domain associated with a fixed pole. The equation involves the singular Hardy-type potential \(\lambda/\sin^2 r\), where \(r=d(o,x)\), and the weighted nonlinearity \((\sin r)^\alpha |u|^p\). For \(\alpha>-2\) and \(0<\lambda\le \lambda^*=((N-2)/2)^2\), we prove the existence of a critical exponent \(p_{\mathrm{crit}}=p_{\mathrm{crit}}(\alpha,N,\lambda)\) governing the existence and nonexistence of solutions. More precisely, we prove that no weak solution exists for any nontrivial nonnegative source in the range \(p>p_{\mathrm{crit}}\), whereas classical solutions exist for some positive continuous sources in the range \(1<p<p_{\mathrm{crit}}\). Under suitable additional assumptions, we also prove nonexistence at the critical exponent \(p=p_{\mathrm{crit}}\). If \(\alpha\le -2\), we show that nonexistence holds for all \(p>1\). The analysis is based on the construction of radial Hardy barriers adapted to the antipodal singularity and on sharp integral estimates involving power and logarithmic cutoffs near \(r=\pi\).
\end{abstract}

\maketitle

\tableofcontents

\section{Introduction and main results}

For \(N\geq 3\), let \(\mathbb S^N\) denote the unit sphere endowed with its standard Riemannian metric, and let \(d\) be the associated geodesic distance. Fix a point \(o\in\mathbb S^N\), and let \(\delta\in(0,\pi)\). We consider the exterior geodesic domain
\[
\Omega_\delta=\{x\in\mathbb S^N:\delta<d(o,x)<\pi\},
\]
together with its partial closure
\[
\overline{\Omega_\delta}^{\,\pi}
=\{x\in\mathbb S^N:\delta\le d(o,x)<\pi\}.
\]
We also denote by
\[
\Gamma_\delta=\{x\in\mathbb S^N: d(o,x)=\delta\}
\]
the regular boundary of \(\Omega_\delta\); see Figure~\ref{fig:exterior-geodesic-domain}.

\begin{figure}[ht!]
\centering
\includegraphics[width=0.5\textwidth]{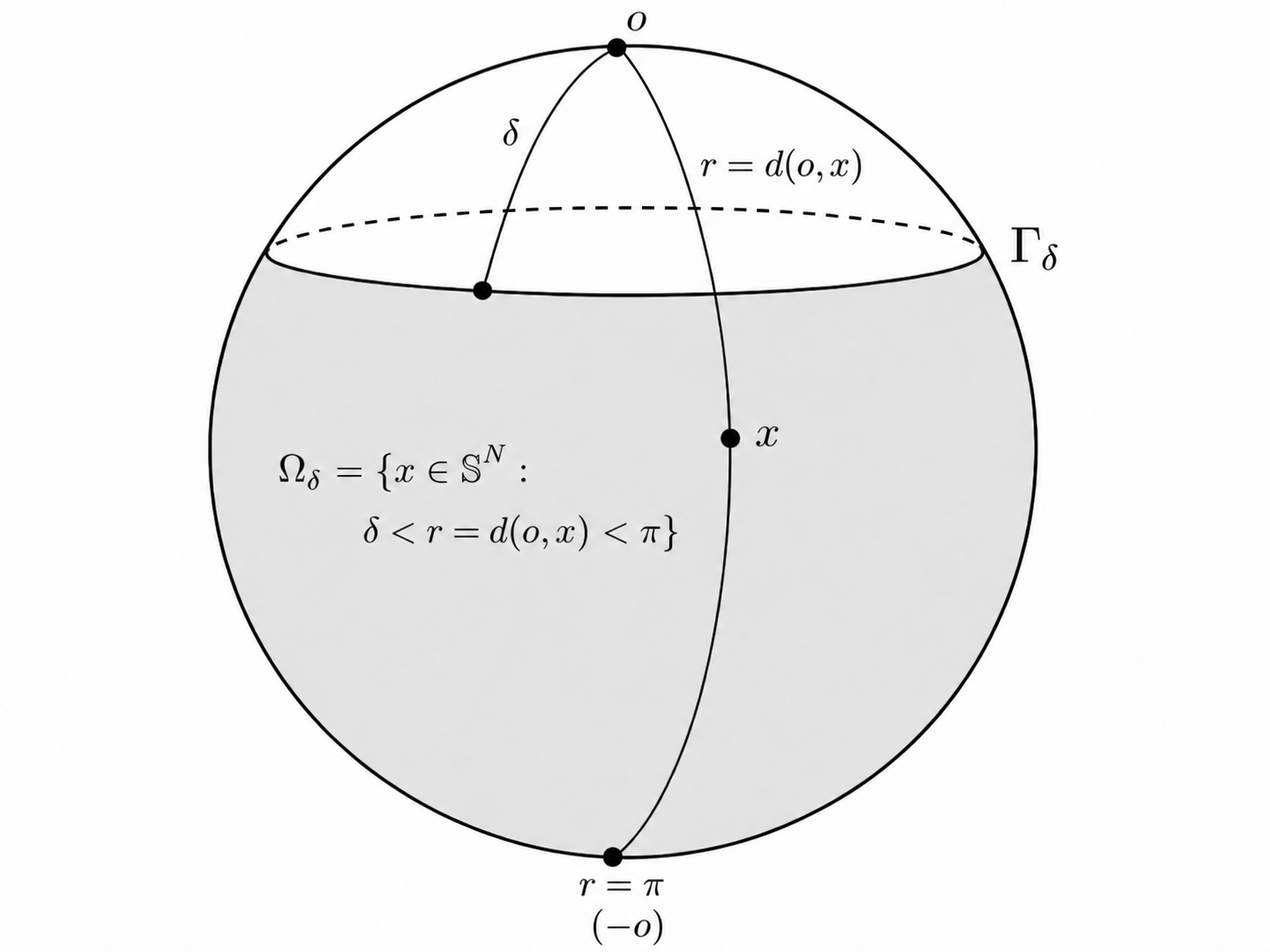}
\caption{The exterior geodesic domain \(\Omega_\delta\) on \(\mathbb S^N\).}
\label{fig:exterior-geodesic-domain}
\end{figure}

We are concerned with the critical behavior governing the existence and nonexistence of weak solutions to the inhomogeneous semilinear heat inequality
\begin{equation}\label{P}
\begin{cases}
\partial_t u-\Delta_{\mathbb{S}^N}u-\dfrac{\lambda}{\sin^2 d(o,x)}\,u
\ge
(\sin d(o,x))^{\alpha}|u|^p+f(x),
& \text{in } (0,\infty)\times\Omega_\delta,\\[6pt]
u\ge 0,
& \text{on } (0,\infty)\times\Gamma_\delta,
\end{cases}
\end{equation}
where \(u=u(t,x)\), \(\Delta_{\mathbb{S}^N}\) denotes the Laplace--Beltrami operator on \(\mathbb{S}^N\), \(p>1\), \(\alpha\in\mathbb{R}\), and
\[
f\in L^1_{\mathrm{loc}}(\overline{\Omega_\delta}^{\,\pi}), 
\qquad f\ge 0,\qquad f\not\equiv 0.
\]
We assume that
\begin{equation}\label{cd-lambda}
0<\lambda\le \lambda^*
=\left(\frac{N-2}{2}\right)^2.
\end{equation}
The potential \(\lambda/\sin^2 r\), with \(r=d(o,x)\), has a Hardy-type singularity at the antipodal point \(r=\pi\), since \(\sin r\sim \pi-r\) as \(r\to\pi^-\), and \(\pi-r=d(x,-o)\) is the geodesic distance from \(x\) to the antipodal point \(-o\).

Weak solutions to \eqref{P} are understood in the following sense. Let
\[
Q=(0,\infty)\times \overline{\Omega_\delta}^{\,\pi},
\qquad
\Sigma=(0,\infty)\times \Gamma_\delta.
\]
We define the class of admissible test functions by
\[
\mathcal{X}
=
\left\{
\xi \in C_c^{1,2}(Q):
\xi \ge 0,\ 
\xi =0 \ \text{on } \Sigma,\ 
\partial_\nu \xi \le 0 \ \text{on } \Sigma
\right\}.
\]
Here, \(C_c^{1,2}(Q)\) denotes the space of functions \(\xi\) that are compactly supported in \(Q\), of class \(C^1\) with respect to \(t\), and of class \(C^2\) with respect to \(x\). Moreover,
\[
\partial_\nu \xi
=
\left\langle \nabla_{\mathbb{S}^N}\xi,\nu\right\rangle,
\]
where \(\langle\cdot,\cdot\rangle\) denotes the Riemannian inner product on \(\mathbb{S}^N\), and \(\nu\) is the outward unit normal to \(\Gamma_\delta\) with respect to \(\Omega_\delta\).

\begin{Def}\label{def-ws}
We say that \(u\) is a weak solution to \eqref{P} if
\[
u \in L^p_{\mathrm{loc}}(Q)
\]
and, for every \(\xi \in \mathcal{X}\), one has
\[
\begin{aligned}
&\int_{Q} (\sin d(o,x))^{\alpha} |u|^p \, \xi \, d\mu \, dt
+ \int_{Q} f(x) \, \xi \, d\mu \, dt\\
&\le
- \int_{Q} u \, \partial_t \xi \, d\mu \, dt
- \int_{Q} u
\left(
\Delta_{\mathbb{S}^N} \xi
+
\frac{\lambda}{\sin^2 d(o,x)}\,\xi
\right)
d\mu \, dt.
\end{aligned}
\]
Here, \(d\mu\) denotes the Riemannian measure associated with the standard metric on \(\mathbb{S}^N\).
\end{Def}

The weak formulation in Definition~\ref{def-ws} is obtained by multiplying \eqref{P} by a test function \(\xi\in\mathcal{X}\) and integrating over \(Q\). Using the Green formula recalled in Section~\ref{sec2}, the boundary term involving \(\xi\,\partial_\nu u\) vanishes, since \(\xi=0\) on \(\Sigma\). The remaining boundary term is
\[
\int_{\Sigma} u\,\partial_\nu \xi \, d\sigma\,dt \le 0,
\]
because \(u\ge0\) and \(\partial_\nu\xi\le0\) on \(\Sigma\). Dropping this nonpositive term from the right-hand side yields the inequality stated in Definition~\ref{def-ws}.

The study of critical exponents for semilinear parabolic equations goes back to the classical work of Fujita \cite{Fujita}. For the Cauchy problem
\[
\begin{cases}
\partial_t u-\Delta u=u^p,
& \text{in }  (0,\infty)\times\mathbb{R}^N,\\[4pt]
u(0,x)=u_0(x)\ge0,
& \text{in } \mathbb{R}^N,
\end{cases}
\]
Fujita, see also \cite{Hayakawa}, identified the threshold
\[
p_F=1+\frac{2}{N}.
\]
More precisely, this threshold separates the nonexistence of global nontrivial nonnegative solutions for \(1<p\le p_F\) from the existence of global positive solutions for sufficiently small initial data when \(p>p_F\).

Many generalizations of Fujita's result have since been obtained; see, for instance, the survey paper by Deng and Levine \cite{DengLevine} and the references therein. In particular, Zhang \cite{Zhang} considered the inhomogeneous parabolic problem
\[
\begin{cases}
\partial_t u-\Delta u=u^p+f(x),\qquad u\geq 0,
& \text{in } (0,\infty)\times\mathbb{R}^N,\\[4pt]
u(0,x)=u_0(x),
& \text{in } \mathbb{R}^N,
\end{cases}
\]
where \(N\geq 3\) and \(f\ge0\) is nontrivial. He proved that, for this problem, the critical exponent is
\[
p^*=1+\frac{2}{N-2}.
\]
More precisely, if \(1<p\leq p^*\), then no global positive solution exists for any nontrivial nonnegative source \(f\). On the other hand, if \(p>p^*\), then global positive solutions exist for some nontrivial nonnegative \(f\) and suitable \(u_0\).

Parabolic equations involving Hardy-type potentials have attracted considerable attention. In \cite{BarasGoldstein}, Baras and Goldstein considered the problem
\[
\begin{cases}
u_t-\Delta u-\dfrac{\lambda}{|x|^2}u=f,
& \text{in } (0,\infty)\times \Omega,\\[6pt]
u(t,x)=0,
& \text{on } (0,\infty)\times \partial\Omega,\\[6pt]
u(0,x)=u_0(x),
& \text{in } \Omega,
\end{cases}
\]
where \(\Omega\subset \mathbb{R}^N\), \(N\ge 3\), is a bounded domain containing the origin. They showed that the existence and nonexistence of positive solutions depend critically on the value of the parameter \(\lambda\). More precisely, they proved that, for nonnegative \(L^2\) initial data and nonnegative source terms, the problem admits a unique global nonnegative weak solution when
\[
\lambda\le \lambda^*=\left(\frac{N-2}{2}\right)^2,
\]
whereas no solution exists, even locally in time, when \(\lambda>\lambda^*\). For further results related to this problem, see, for instance, \cite{Cabre,Goldstein,Vancostenoble}.

The corresponding semilinear inhomogeneous problem with nonlinearity \(u^p\), where \(u_0,f\ge0\) belong to suitable function classes, was later studied by Abdellaoui et al.~\cite{Abdellaoui}. They showed that, when \(\lambda>0\), there exists a critical exponent \(p_+(\lambda)\) such that, for \(p\ge p_+(\lambda)\), no solution exists for any nontrivial nonnegative initial datum. In \cite{Abdellaoui2}, the influence of Hardy-type potentials was investigated in the presence of gradient nonlinearities. Further results related to evolution equations and inequalities with Hardy-type potentials can be found in \cite{Abdellaoui3,Abdellaoui4,ArdilaHamanoIkeda,HAM,JSS,JSS2}.

Fujita-type phenomena have also been studied in non-Euclidean settings, including Riemannian manifolds. In this direction, Zhang \cite{Zhang99} extended several classical blow-up results for semilinear parabolic problems from \(\mathbb{R}^N\) to noncompact complete Riemannian manifolds under a volume growth assumption on geodesic balls. We also refer to Mastrolia et al.~\cite{MAS}, who investigated the nonexistence of nonnegative nontrivial weak solutions for a class of parabolic differential inequalities.

More recently, Gu et al.~\cite{GU} studied the semilinear parabolic problem
\[
\partial_t u-\Delta u+Vu=Wu^p,
\qquad \text{in } (0,\infty)\times M,
\]
where \(M\) is a connected noncompact geodesically complete Riemannian manifold. Their results provide criteria for the existence and nonexistence of global positive solutions in terms of the potential \(V\), the weight \(W\), and the geometry of \(M\). A key feature of their approach is the use of a positive solution \(h\) of
\[
\Delta h=Vh,
\]
which allows the equation to be transformed into a weighted heat equation through a Doob \(h\)-transform. The proof also relies on a suitable family of test functions adapted to the geometry of the manifold.

Further results on elliptic problems on Riemannian manifolds can be found in \cite{Grigoryan-Sun,Grigoryan-Sun2,JRS,MAS15} and the references therein.

The present work is motivated by the role of Hardy-type potentials in parabolic problems and by the work of Gu et al.~\cite{GU}, where the critical behavior is governed by the interaction between the potential, the nonlinear weight, and the geometry of the underlying Riemannian manifold.

We point out that the techniques used in the present paper differ from those in \cite{GU}, mainly because of the different geometric nature of the problem. Indeed, in \cite{GU}, the equation is studied on a connected noncompact geodesically complete Riemannian manifold without boundary, and the analysis relies on the geometry at infinity. In contrast, here the ambient manifold is the compact sphere \(\mathbb S^N\), while the problem is posed in the exterior geodesic domain \(\Omega_\delta\), with a boundary condition on \(\Gamma_\delta\) and a Hardy-type singularity at the antipodal point \(r=\pi\). Thus, the critical behavior is not determined by volume growth at infinity, but by the local spherical geometry near \(r=\pi\) and by the radial Hardy barrier associated with
\[
\Delta_{\mathbb S^N}+\frac{\lambda}{\sin^2 r},\qquad r=d(o,x).
\]
This barrier is chosen to vanish on \(\Gamma_\delta\), so that the resulting test functions are compatible with the boundary condition in the weak formulation.

Another difference is that the proof in \cite{GU} uses, after the \(h\)-transform, test functions involving negative powers of the transformed positive solution \(v=u/h\), namely functions of the form \(v^{-a}\varphi^b\). This relies on the positivity of the solution. In the present paper, no sign condition is imposed on \(u\) in the interior of the domain, and the nonlinearity is \(|u|^p\). Therefore, our test functions are independent of \(u\) and are instead built from the radial Hardy barrier and from cutoffs adapted to the boundary and to the antipodal singularity.

For \(N\ge 3\) and \(\lambda\) satisfying \eqref{cd-lambda}, define
\[
\lambda_N
=
\frac{N-2}{2}
-
\sqrt{
\left(\frac{N-2}{2}\right)^2-\lambda
}.
\]
Our main result is stated as follows.

\begin{Th}\label{T1.2}
Assume that \(N\ge 3\), \(\lambda\) satisfies \eqref{cd-lambda}, and \(\alpha\in\mathbb{R}\).
\begin{itemize}
\item[\rm(i)] There exists \(a_\lambda \in [0,\pi)\) such that, for every \(\delta\in (a_\lambda,\pi)\), the following nonexistence results hold.
\begin{itemize}
\item[\rm(a)] If
\begin{equation}\label{cd-nnex1}
p> \max\left\{1, 1+\frac{\alpha+2}{\lambda_N}\right\},
\end{equation}
then problem \eqref{P} admits no weak solution for any function
\[
f\in L^1_{\mathrm{loc}}(\overline{\Omega_\delta}^{\,\pi}),
\qquad
f\ge 0,\qquad f\not\equiv 0.
\]

\item[\rm(b)] Assume that
\begin{equation}\label{cd-lambda-alpha}
\alpha>-2,\quad 0<\lambda<\lambda^*
\qquad \text{or}\qquad
-2<\alpha<\frac{N-6}{2},\quad \lambda=\lambda^*.
\end{equation}
If
\begin{equation}\label{cd-nnex1-crit}
p=1+\frac{\alpha+2}{\lambda_N},
\end{equation}
then the same nonexistence conclusion holds.
\end{itemize}

\item[\rm(ii)] Assume that
\begin{equation}\label{cd-ex1}
\alpha>-2,\qquad 1<p< 1+\frac{\alpha+2}{\lambda_N}.
\end{equation}
Then there exists \(b\in [0,\pi)\) such that, for every \(\delta\in (b,\pi)\), problem \eqref{P} admits classical solutions
\[
u\in C^\infty((0,\infty)\times\Omega_\delta)\cap C(Q)
\]
for some  function
\[
f\in C(\overline{\Omega_\delta}^{\,\pi}),\qquad f>0.
\]
\end{itemize}
\end{Th}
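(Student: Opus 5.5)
The plan is to work with radial functions of $r=d(o,x)$. For radial $\phi$ one has
\[
\Delta_{\mathbb S^N}\phi=\phi''+(N-1)\cot r\,\phi' .
\]
Near $r=\pi$ we set $s=\pi-r$, so that $\sin r\sim s$. The operator $L=\Delta_{\mathbb S^N}+\lambda/\sin^2 r$ then behaves like the Euclidean Hardy operator $\partial_s^2+\frac{N-1}{s}\partial_s+\lambda/s^2$. Its indicial roots are $-\lambda_N$ and $-(N-2-\lambda_N)$, since $\gamma^2-(N-2)\gamma+\lambda=0$ has roots $\lambda_N$ and $N-2-\lambda_N$.

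\emph{Barrier.} The first step is to construct a radial Hardy barrier $H\ge0$ on $[\delta,\pi)$ with the following properties:
\begin{itemize}
\item $LH=0$ in $\Omega_\delta$;
\item $H=0$ on $\Gamma_\delta$ and $\partial_\nu H\le 0$ there;
\item $H\sim c\,(\pi-r)^{-\lambda_N}$ as $r\to\pi$.
\end{itemize}
To build it, solve the radial ODE by Frobenius at $r=\pi$ to get two solutions, $\phi_1\sim s^{-\lambda_N}$ (positive near $\pi$) and $\phi_2$, the more singular one, or the logarithmic one when $\lambda=\lambda^*$. Then take a combination vanishing at $\delta$. The condition $\delta>a_\lambda$ is used here. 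When $\delta$ is close to $\pi$, the solutions are dominated by their leading terms, so the combination is positive on $(\delta,\pi)$ and has the right sign of derivative at $\delta$. We define $a_\lambda$ as the infimum of admissible $\delta$, which handles the possible zeros of $\phi_1$ and $\phi_2$ far from $\pi$.

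\emph{Nonexistence (i).} We test with $\xi=H\,\psi_R(r)\,\eta_T(t)^k$, where:
\begin{itemize}
\item $\psi_R$ is a cutoff vanishing for $\pi-r<1/R$ and equal to $1$ for $\pi-r>2/R$; it is a power cutoff in (a) and a logarithmic one in (b);
\item $\eta_T(t)=\eta(t/T)$.
\end{itemize}
Multiplying $H$ by a cutoff in $r$ away from $\delta$ preserves the boundary conditions, since $\psi_R\equiv1$ near $\Gamma_\delta$. We have
\[
L(H\psi)=H\Delta\psi+2\langle\nabla H,\nabla\psi\rangle ,
\]
which is supported in $\{1/R<s<2/R\}$. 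Young's inequality $|u|\,|g|\le\frac12 w|u|^p\xi+C\,(w\xi)^{-1/(p-1)}|g|^{p'}$, with $w=(\sin r)^\alpha$, reduces matters to the estimate
\[
\int_0^T\!\!\int f H\psi_R\,d\mu\,dt\le C\,T^{-1/(p-1)}T\,I_1+C\,T\,I_2(R),
\qquad
I_2(R)\approx\int_{1/R}^{2/R} s^{-\alpha/(p-1)}\,s^{-\lambda_N}\,s^{-2p'}\,s^{N-1}\,ds .
\]
In $I_2$ we used that $\nabla H$ and $\nabla\psi$ produce the factor $s^{-\lambda_N-2}$. The exponent count gives
\[
I_2\sim R^{\,2p'+\lambda_N+\alpha/(p-1)-N}
=R^{-(N-2-\lambda_N)+(\alpha+2+\lambda_N)/(p-1)+\dots}.
\]
More carefully, the balancing exponent is negative exactly when $p>1+(\alpha+2)/\lambda_N$, after using that the dual root is $N-2-\lambda_N$.

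The time term is handled by keeping $R$ fixed and letting $T\to\infty$. Alternatively, we use the time-independent stationary bound: divide by $T$ and note that $\int fH\psi_R\,d\mu$ is positive and bounded below. We then let $T\to\infty$ and afterwards $R\to\infty$, obtaining $\int fH\,d\mu\le0$, which contradicts $f\not\equiv0$. For $\alpha\le-2$ the condition $p>1$ suffices, which explains the maximum in \eqref{cd-nnex1}. At the critical exponent (b), the power cutoff gives only a bound $O(1)$. Switching to $\psi_R=\log(1/s)$-type cutoffs on $\{R^{-2}<s<R^{-1}\}$ gives a decay of order $(\log R)^{-(p'-1)}$. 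This is a standard refinement whose verification needs the integral $\int s^{N-1-\dots}$ to be exactly logarithmic. For $\lambda=\lambda^*$, $H$ carries an extra $\log$ factor, and the restriction $\alpha<(N-6)/2$ is what keeps the $\log$ powers favourable.

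\emph{Existence (ii).} We look for stationary radial solutions $u=\varepsilon\,\phi(r)$ with $\phi\sim s^{-\beta}$, where $\beta$ is chosen so that $\lambda_N<\beta<\min\{N-2-\lambda_N,\ (\alpha+2)/(p-1)\}$. This interval is nonempty precisely under \eqref{cd-ex1}; we take $\phi$ smooth and positive, and adapted near $\delta$. Then
\[
-L\phi\sim(\beta-\lambda_N)(N-2-\lambda_N-\beta)\,s^{-\beta-2}>0 .
\]
Since $\beta(p-1)<\alpha+2$, we have $w\phi^p\sim s^{\alpha-\beta p}\ll s^{-\beta-2}$. Hence for small $\varepsilon$ the choice $f:=-\varepsilon L\phi-\varepsilon^p w\phi^p$ is positive near $\pi$. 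Choosing $\delta>b$ close to $\pi$ makes this hold on all of $\overline{\Omega_\delta}^{\,\pi}$, with $u>0$ on $\Gamma_\delta$.

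The main obstacle is the critical case (b), particularly $\lambda=\lambda^*$: there the log-corrected barrier and the log cutoff must be balanced precisely to extract decay.
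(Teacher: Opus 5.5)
\textbf{Main gap: the barrier is on the wrong branch.} This is exactly the point that fixes the threshold in part (i).

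For $0<\lambda<\lambda^*$ you assert that the solution vanishing at $\delta$ satisfies $H\sim c(\pi-r)^{-\lambda_N}$, i.e.\ it lies on the mild Frobenius branch $\mu_+=-\lambda_N$. That cannot hold for $\delta$ near $\pi$. Any combination $c_1\phi_1+c_2\phi_2$ with $c_2\neq0$ is dominated at $\pi$ by $\phi_2\sim(\pi-r)^{-(N-2-\lambda_N)}$. So your asymptotics force $c_2=0$, and then $H$ is a multiple of $\phi_1$. But $\phi_1$ does not vanish at $\delta$ once $\delta$ is beyond its last zero, which is how the paper defines $a_\lambda$.

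Hence the barrier vanishing on $\Gamma_\delta$ is necessarily the \emph{more singular} one:
$h\asymp(\pi-r)^{\mu_-}$ and $|h'|\lesssim(\pi-r)^{\mu_--1}$, with $\mu_-=-(N-2-\lambda_N)$. The paper builds it by reduction of order, $h(r)=h_\pi(r)\int_\delta^r(\sin s)^{1-N}h_\pi(s)^{-2}\,ds$.

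This is not cosmetic. With your stated asymptotics, your own count gives $I_2(R)\sim R^{\frac{\alpha+2}{p-1}-(N-2-\lambda_N)}$. That decays already for $p>1+\frac{\alpha+2}{N-2-\lambda_N}$, which is strictly below $p_{\mathrm{crit}}$ when $\lambda<\lambda^*$. It would therefore contradict your own existence construction in (ii). The sentence ``more carefully, the balancing exponent is negative exactly when $p>1+(\alpha+2)/\lambda_N$'' has no computation behind it.

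With the $\mu_-$ branch the exponent becomes $\frac{2p}{p-1}-N+\frac{\alpha}{p-1}-\mu_-=\frac{\alpha+2}{p-1}-\lambda_N$. This gives exactly $p_{\mathrm{crit}}$ in (a). With the logarithmic cutoff it then gives $(\ln R)^{1-p'}$ in (b). At $\lambda=\lambda^*$ it gives $(\ln R)^{2-p'}$, and $p'>2$ is precisely the condition $\alpha<\frac{N-6}{2}$.

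Your order of limits ($T\to\infty$ at fixed $R$, then $R\to\infty$) still works, even though the $I_1$ term may now grow polynomially in $R$. You also need a large power $m$ on the spatial cutoff, as on the temporal one, so that $\xi^{-1/(p-1)}|L\xi|^{p'}$ is integrable where the cutoff vanishes.

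\textbf{Second gap: existence at $\lambda=\lambda^*$.} In (ii), your interval $\lambda_N<\beta<\min\{N-2-\lambda_N,\frac{\alpha+2}{p-1}\}$ is empty when $\lambda=\lambda^*$, because then $\lambda_N=N-2-\lambda_N=\frac{N-2}{2}$. So no pure power is a strict supersolution in the critical Hardy case, and your claim that the interval is nonempty precisely under the hypothesis of (ii) fails there.

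The paper instead uses the log-corrected profile $G_\gamma=(\sin r)^{-\lambda_N}(-\ln\sin r)^{\gamma}$ with $0<\gamma<1$. For it, $-\Delta_{\mathbb S^N}G_\gamma-\frac{\lambda^*}{\sin^2 r}G_\gamma\sim\gamma(1-\gamma)(\sin r)^{-\lambda_N-2}(-\ln\sin r)^{\gamma-2}>0$ near $\pi$. The strict power gap $\alpha-\lambda_N p>-\lambda_N-2$ still absorbs the logarithms.

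For $0<\lambda<\lambda^*$ your existence argument coincides with the paper's (take $q=-\beta$). Using a stationary $u$ instead of $a(t)U_\varepsilon$ is harmless.
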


\begin{Rem}
When \(\alpha\le -2\), only a nonexistence regime occurs for \(p>1\). Indeed,
\[
1+\frac{\alpha+2}{\lambda_N}\le 1,
\]
and therefore \eqref{cd-nnex1} is equivalent to \(p>1\). Thus, for every \(p>1\), problem \eqref{P} admits no weak solution for any nontrivial nonnegative source
\(f\in L^1_{\mathrm{loc}}(\overline{\Omega_\delta}^{\,\pi})\).
\end{Rem}

\begin{Rem}
Assume that \(\alpha>-2\). Then problem \eqref{P} admits a critical behavior governed by
\[
p_{\mathrm{crit}}=1+\frac{\alpha+2}{\lambda_N}.
\]
More precisely, for \(\delta\) sufficiently close to \(\pi\), Theorem~\ref{T1.2} gives nonexistence for any nontrivial nonnegative source when \(p>p_{\mathrm{crit}}\), and existence for some positive continuous source when \(1<p<p_{\mathrm{crit}}\). Moreover, under \eqref{cd-lambda-alpha}, the nonexistence result extends to the critical exponent \(p=p_{\mathrm{crit}}\). In particular, when \(\lambda=\lambda^*\), we have
\[
p_{\mathrm{crit}}=1+\frac{2(\alpha+2)}{N-2}.
\]
\end{Rem}

\begin{Rem}
For \(\alpha>-2\), Theorem~\ref{T1.2} gives a complete critical picture, except in the critical Hardy case (see Figure~\ref{fig:critical-regions})
\[
\lambda=\lambda^*,
\qquad
\alpha\ge \frac{N-6}{2},
\qquad
p=p_{\mathrm{crit}}.
\]

\begin{figure}[ht!]
\centering
\includegraphics[width=0.75\textwidth]{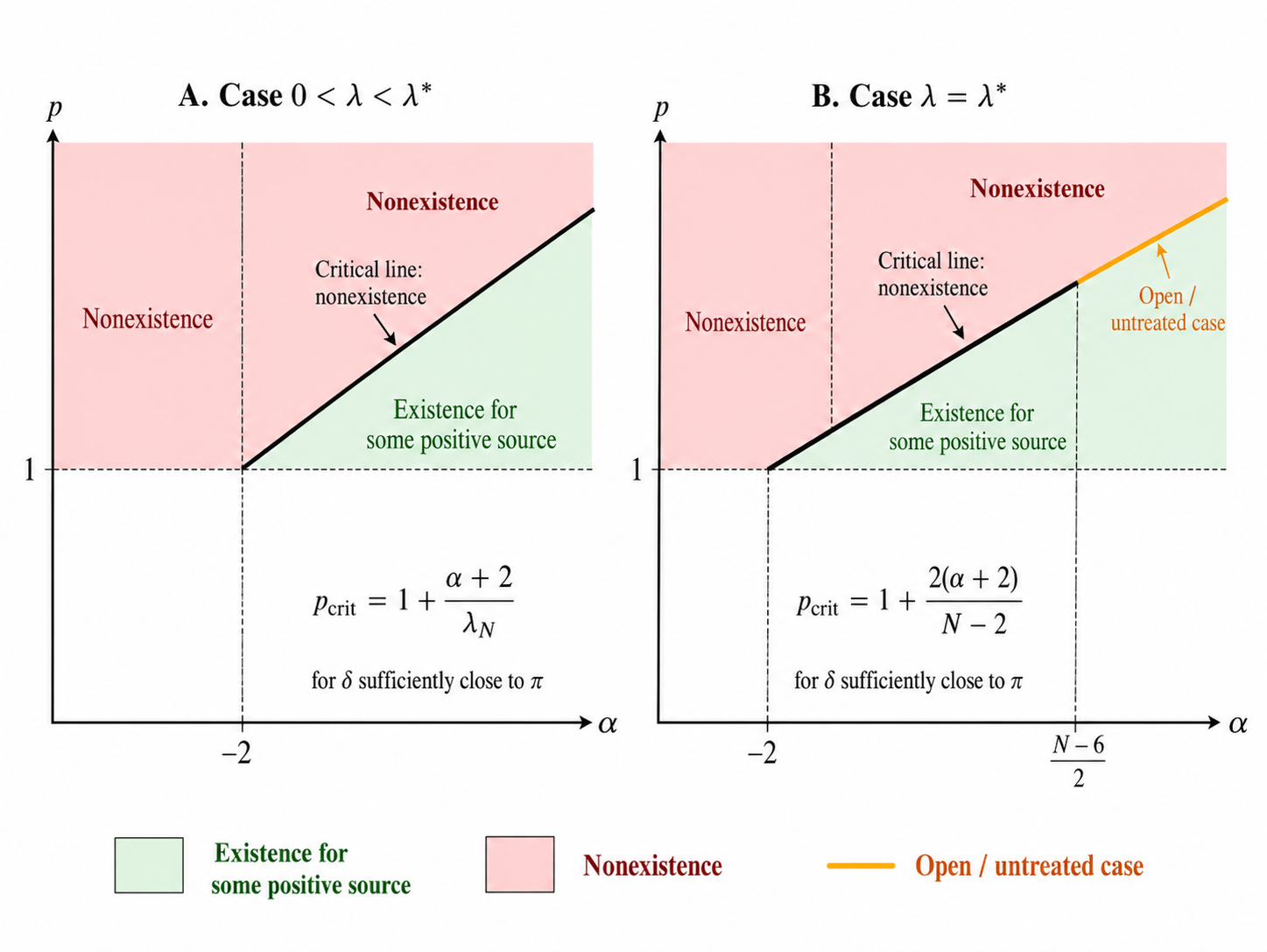}
\caption{Existence and nonexistence regions for problem \eqref{P}.}
\label{fig:critical-regions}
\end{figure}

\end{Rem}

Let us add a few comments on our approach. The nonexistence part is based on a suitable choice of test functions \(\xi\in\mathcal{X}\), adapted simultaneously to the singular Hardy potential, the geometry of the exterior geodesic domain \(\Omega_\delta\), the boundary condition on \(\Gamma_\delta\), and the required integral estimates. The main point is that the test functions must vanish in a small neighborhood of the singular point \(r=\pi\), satisfy the admissibility conditions on \(\Gamma_\delta\), and still retain enough mass on fixed subannuli in order to detect the nontrivial source term. To achieve this, we construct a positive radial Hardy barrier \(H\) satisfying
\[
\Delta_{\mathbb S^N}H+\frac{\lambda}{\sin^2 r}H=0,\qquad r=d(o,x),
\]
in a neighborhood of \(r=\pi\), together with
\[
H=0,\qquad \partial_\nu H<0
\quad \text{on } \Gamma_\delta.
\]
This barrier carries the precise Hardy singular profile and is used as the main spatial weight in the test functions.

For the supercritical nonexistence range, the spatial cutoff is chosen in the form
\[
H(x)\psi^m(R(\pi-r)),
\]
which removes a small neighborhood of the singular point \(r=\pi\) and localizes the estimates on a thin transition layer near that point. This produces the power-type quantity
\[
R^{\frac{\alpha+2}{p-1}-\lambda_N}.
\]
The condition that this power be negative is precisely
\[
p>1+\frac{\alpha+2}{\lambda_N},
\]
which is the supercritical nonexistence condition.

In the critical case \(p=p_{\mathrm{crit}}\), the power
\[
\frac{\alpha+2}{p-1}-\lambda_N
\]
vanishes, so the previous choice of cutoff is no longer sufficient to obtain a vanishing upper bound as \(R\to\infty\). To recover a decay factor, we use a logarithmic cutoff near the singular point \(r=\pi\), in the form
\[
H(x)\eta^m\left(
\frac{\ln(R(\pi-r))}{\ln\sqrt R}
\right).
\]
This choice localizes the estimates on the transition region
\[
\frac1R<\pi-r<\frac1{\sqrt R},
\]
and produces an additional factor of order \((\ln R)^{-1}\) in the derivative estimates. This gain is enough to treat the critical exponent when \(0<\lambda<\lambda^*\). In the critical Hardy case \(\lambda=\lambda^*\), the Hardy barrier itself carries an additional logarithmic behavior, which creates a logarithmic loss in the integral estimates and explains the extra restriction appearing in \eqref{cd-lambda-alpha} (see Figure~\ref{fig:critical-regions}).

For the existence part, we construct explicit positive stationary supersolutions and then turn them into time-dependent classical solutions of the inequality by multiplying them by a simple increasing time factor. The construction differs in the two cases \(0<\lambda<\lambda^*\) and \(\lambda=\lambda^*\), because the singular profiles of the associated Hardy operator are different.

The rest of the paper is organized as follows. In Section~\ref{sec2}, we recall some geometric preliminaries on \(\mathbb S^N\), including stereographic coordinates, geodesic polar coordinates, the expression of the Laplace--Beltrami operator, and the Green formula used in the weak formulation. In Section~\ref{sec3}, we develop the auxiliary tools needed for the proof. We first construct a positive radial Hardy barrier near the singular point \(r=\pi\). We then build admissible test functions based on this barrier, including a logarithmic cutoff for the critical exponent. We also prove the integral estimates needed for the proof of the main result. In Section~\ref{sec4}, we prove Theorem~\ref{T1.2}.

\section{Geometric preliminaries on \(\mathbb{S}^N\)}\label{sec2}

In this section, we recall some geometric facts on \(\mathbb{S}^N\) that will be used throughout the paper. For further details, we refer to \cite{Grigoryan}.

We denote by
\[
\mathbb{S}^N=\{y\in\mathbb{R}^{N+1}:|y|=1\}
\]
the unit sphere. For any point \(y\in\mathbb{S}^N\) different from the north pole, we denote its stereographic coordinate from the north pole by \(x\in\mathbb{R}^N\). In these coordinates, the Riemannian metric on \(\mathbb{S}^N\) is given by
\[
g_{ij}(x)=\frac{4}{(1+|x|^2)^2}\,\delta_{ij},
\qquad i,j=1,\dots,N.
\]
We set
\[
g^{ij}=(g_{ij})^{-1},
\qquad
g=\det(g_{ij}),
\qquad i,j=1,\dots,N.
\]

The contravariant components of the gradient of a smooth function \(u\) on \(\mathbb{S}^N\) are
\[
(\nabla_{\mathbb{S}^N}u)^i
=
\sum_{j=1}^N g^{ij}\frac{\partial u}{\partial x_j},
\qquad i=1,\dots,N.
\]
The Laplace--Beltrami operator on \(\mathbb{S}^N\) is given by
\[
\Delta_{\mathbb{S}^N} u
=
\frac{1}{\sqrt{g}}
\sum_{i=1}^N
\frac{\partial}{\partial x_i}
\left(
\sqrt{g}
\sum_{j=1}^N g^{ij}\frac{\partial u}{\partial x_j}
\right).
\]

For every open set \(\mathcal{O}\subset \mathbb{S}^N\) with \(C^1\) boundary, and for all
\(u,v\in C^2(\mathcal{O})\cap C^1(\overline{\mathcal{O}})\), the following Green formula holds:
\[
\int_{\mathcal{O}} v\,\Delta_{\mathbb{S}^N}u\,d\mu
=
\int_{\partial\mathcal{O}} v\,\partial_\nu u\,dS
-
\int_{\mathcal{O}}
\left\langle \nabla_{\mathbb{S}^N}u,\nabla_{\mathbb{S}^N}v\right\rangle
\,d\mu,
\]
where
\[
\partial_\nu u
=
\left\langle \nabla_{\mathbb{S}^N}u,\nu\right\rangle.
\]
Here, \(\nu\) is the outward unit normal to \(\partial\mathcal{O}\), \(\langle\cdot,\cdot\rangle\) is the Riemannian inner product on \(\mathbb{S}^N\), \(d\mu\) denotes the Riemannian measure on \(\mathbb{S}^N\), and \(dS\) denotes the induced Riemannian measure on \(\partial\mathcal{O}\).

Let \((r,\theta)\) be the geodesic polar coordinates centered at the fixed point
\(o\in\mathbb{S}^N\). For
\(x\in \mathbb{S}^N\setminus\{o,-o\}\), we write
\[
x=(r,\theta),
\qquad
r=d(o,x)\in(0,\pi),\quad \theta\in \mathbb{S}^{N-1}.
\]
In these coordinates, the standard metric on \(\mathbb{S}^N\) is given by
\[
ds^2=dr^2+(\sin r)^2 d\theta^2,
\]
where \(d\theta^2\) denotes the standard metric on \(\mathbb{S}^{N-1}\). Consequently,
\[
d\mu=(\sin r)^{N-1}\,dr\,d\sigma(\theta),
\]
where \(d\sigma(\theta)\) denotes the Riemannian measure on \(\mathbb{S}^{N-1}\).

In particular, if \(u=u(r)\) is radial, then
\[
\nabla_{\mathbb{S}^N}u=u'(r)\,\partial_r,
\qquad
|\nabla_{\mathbb{S}^N}u|=|u'(r)|,
\]
and
\[
\Delta_{\mathbb{S}^N}u
=
u''(r)+(N-1)\cot r\,u'(r).
\]

\section{Auxiliary results}\label{sec3}

This section contains the auxiliary tools needed in the proof of the main result. Throughout this section, we assume that
\[
N\ge 3,\qquad p>1,\qquad \alpha\in\mathbb{R},\qquad \delta\in(0,\pi),
\]
and that \(\lambda\) satisfies \eqref{cd-lambda}.

\subsection{Construction of a radial Hardy barrier in \(\Omega_\delta\)}

We consider the Hardy-type operator
\[
L_\lambda
=
\Delta_{\mathbb{S}^N}+\frac{\lambda}{\sin^2 r},
\qquad r=d(o,x).
\]
In this subsection, when \(\delta\) is sufficiently close to \(\pi\), we construct a radial function \(H(x)=h(r)\) satisfying
\begin{equation}\label{H-OK}
\begin{cases}
L_\lambda H=0, & \text{in } \Omega_\delta,\\[4pt]
H>0, & \text{in } \Omega_\delta,\\[4pt]
H=0, & \text{on } \Gamma_\delta,\\[4pt]
\partial_\nu H<0, & \text{on } \Gamma_\delta,
\end{cases}
\end{equation}
and study its behavior near the singular point \(r=\pi\).

Since \(\nu=-\partial_r\) on \(\Gamma_\delta\) and \(H\) is radial, \eqref{H-OK} reduces to
\begin{equation}\label{H-pb}
\begin{cases}
h''(r)+(N-1)\cot r\,h'(r)+\dfrac{\lambda}{\sin^2 r}\,h(r)=0,
& \delta<r<\pi,\\[6pt]
h(r)>0,
& \delta<r<\pi,\\[6pt]
h(\delta)=0,\\[4pt]
h'(\delta)>0.
\end{cases}
\end{equation}

The endpoint \(r=\pi\) is a regular singular endpoint. Its indicial equation is
\[
\mu(\mu+N-2)+\lambda=0.
\]
The corresponding roots are
\[
\mu_\pm
=
-\frac{N-2}{2}
\pm
\sqrt{\left(\frac{N-2}{2}\right)^2-\lambda}.
\]
We distinguish two cases.

\noindent\textbf{Case 1.} \(0<\lambda<\lambda^*\). In this case, we have
\[
\mu_-<-\frac{N-2}{2}<\mu_+<0.
\]
By the Frobenius theorem, there exists a solution \(h_\pi\) of the radial equation
\[
h_\pi''(r)+(N-1)\cot r\,h_\pi'(r)
+\frac{\lambda}{\sin^2 r}h_\pi(r)=0
\]
which is characterized near \(r=\pi\) by
\[
h_\pi(r)
=
(\pi-r)^{\mu_+}
\sum_{k=0}^{\infty}A_k(\pi-r)^k,
\qquad A_0>0.
\]
Since \(A_0>0\), it follows that
\[
h_\pi(r)>0
\]
for \(r\) sufficiently close to \(\pi\). We define
\begin{equation}\label{alambda-def}
a_\lambda=
\begin{cases}
\sup\{a\in(0,\pi):h_\pi(a)=0\}, & \text{if }h_\pi \text{ has zeros in }(0,\pi),\\[4pt]
0, & \text{otherwise}.
\end{cases}
\end{equation}
Then \(a_\lambda<\pi\), and
\[
h_\pi(r)>0
\qquad \text{for } a_\lambda<r<\pi.
\]

Let now \(\delta\in(a_\lambda,\pi)\). We define
\begin{equation}\label{h-def}
h(r)
=
h_\pi(r)
\int_\delta^r
\frac{ds}{(\sin s)^{N-1}h_\pi(s)^2},
\qquad \delta\le r<\pi.
\end{equation}
By the reduction-of-order formula, \(h\) is another solution of the same radial equation. Since \(h_\pi(r)>0\) for \(a_\lambda<r<\pi\), the integral in \eqref{h-def} is well defined and positive for every \(r\in(\delta,\pi)\). Hence,
\[
h(r)>0,
\qquad \delta<r<\pi.
\]
Moreover,
\[
h(\delta)=0,
\]
and
\[
h'(\delta)
=
\frac{1}{(\sin \delta)^{N-1}h_\pi(\delta)}
>0.
\]
Therefore, \(h\) satisfies \eqref{H-pb}.

We next derive the asymptotic behavior of \(h\) and \(h'\) as \(r\to\pi^-\). For positive functions \(F\) and \(G\), we shall write
\[
F(r)\asymp G(r)
\qquad \text{as } r\to\pi^-,
\]
if there exist constants \(c,C>0\) such that
\[
cG(r)\le F(r)\le CG(r)
\]
for all \(r\) sufficiently close to \(\pi\).

From the Frobenius expansion of \(h_\pi\), we have
\[
h_\pi(r)\asymp(\pi-r)^{\mu_+}
\qquad \text{as } r\to\pi^-.
\]
Therefore,
\[
\frac{1}{(\sin r)^{N-1}h_\pi(r)^2}
\asymp
(\pi-r)^{1-N-2\mu_+}
\qquad \text{as } r\to\pi^-.
\]
Since
\[
2-N-2\mu_+<0,
\]
we obtain
\[
\int_\delta^r
\frac{ds}{(\sin s)^{N-1}h_\pi(s)^2}
\asymp
(\pi-r)^{2-N-2\mu_+}
\qquad \text{as } r\to\pi^-.
\]
Hence, by \eqref{h-def},
\[
h(r)
\asymp
(\pi-r)^{2-N-\mu_+}
\qquad \text{as } r\to\pi^-.
\]
Since
\[
\mu_++\mu_-=2-N,
\]
we conclude that
\[
h(r)\asymp(\pi-r)^{\mu_-}
\qquad \text{as } r\to\pi^-.
\]

We now estimate \(h'\). Differentiating \eqref{h-def}, we obtain
\[
h'(r)
=
h_\pi'(r)
\int_\delta^r
\frac{ds}{(\sin s)^{N-1}h_\pi(s)^2}
+
\frac{1}{(\sin r)^{N-1}h_\pi(r)}.
\]
Moreover, from the Frobenius expansion,
\[
|h_\pi'(r)|\asymp(\pi-r)^{\mu_+-1}
\qquad \text{as } r\to\pi^-.
\]
Using the previous estimates, we obtain
\[
|h'(r)|
\le
C(\pi-r)^{1-N-\mu_+}
\qquad \text{as } r\to\pi^-,
\]
for some constant \(C>0\). Since
\[
1-N-\mu_+=\mu_- -1,
\]
it follows that
\[
|h'(r)|
\le
C(\pi-r)^{\mu_- -1}
\qquad \text{as } r\to\pi^-.
\]

\noindent\textbf{Case 2.} \(\lambda=\lambda^*\). In this case, the indicial equation has the double root
\[
\mu_+=\mu_-=-\frac{N-2}{2}.
\]
By the Frobenius theorem, there exists a solution \(h_\pi\) of the radial equation, defined near \(r=\pi\), such that
\[
h_\pi(r)
=
(\pi-r)^{-\frac{N-2}{2}}
\sum_{k=0}^{\infty}A_k(\pi-r)^k,
\qquad A_0>0.
\]
Hence, \(h_\pi>0\) near \(r=\pi\). We define \(a_{\lambda^*}\) and then \(h\) by \eqref{h-def}, exactly as in Case~1. The same positivity argument gives
\[
h(r)>0
\qquad \text{for } \delta<r<\pi,
\]
provided \(a_{\lambda^*}<\delta<\pi\). Moreover, the reduction-of-order formula gives
\[
h(r)\asymp
(\pi-r)^{-\frac{N-2}{2}}
\ln \frac{1}{\pi-r}
\qquad \text{as } r\to\pi^-,
\]
and
\[
|h'(r)|
\le
C(\pi-r)^{-\frac{N}{2}}
\ln\frac{1}{\pi-r}
\qquad \text{as } r\to\pi^-.
\]

The preceding analysis provides the radial Hardy barrier needed in the sequel.

\begin{Lem}\label{L3.1}
There exists \(a_\lambda\in[0,\pi)\) such that, for every
\(\delta\in(a_\lambda,\pi)\), there exists a radial function \(H=H_\delta\), \(H(x)=h(r)\),
satisfying \eqref{H-OK}. Moreover, as \(r\to\pi^-\), the following estimates hold:
\begin{itemize}
\item[\rm(i)] If \(0<\lambda<\lambda^*\), then
\[
h(r)\asymp(\pi-r)^{\mu_-},
\qquad
|h'(r)|\le C(\pi-r)^{\mu_- -1}.
\]

\item[\rm(ii)] If \(\lambda=\lambda^*\), then
\[
h(r)\asymp
(\pi-r)^{-\frac{N-2}{2}}
\ln\frac{1}{\pi-r},
\qquad
|h'(r)|
\le
C(\pi-r)^{-\frac{N}{2}}
\ln\frac{1}{\pi-r}.
\]
\end{itemize}
Here, \(C>0\) is independent of \(r\).
\end{Lem}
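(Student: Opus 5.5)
The plan is to assemble Lemma~\ref{L3.1} directly from the construction preceding it, checking in turn the three pieces it needs: the definition of \(a_\lambda\), the properties \eqref{H-OK}, and the asymptotics.

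\textbf{Step 1: the Frobenius solution and \(a_\lambda\).} First I fix the Frobenius solution \(h_\pi\) at the regular singular endpoint \(r=\pi\). In the equation, \((N-1)\cot r\sim -(N-1)/(\pi-r)\) and \(\lambda/\sin^2 r\sim\lambda/(\pi-r)^2\), which gives the indicial roots \(\mu_\pm\). I take \(h_\pi\) attached to the larger root \(\mu_+\). The roots differ by \(2\sqrt{\lambda^*-\lambda}\), which may be an integer, but the larger root always gives a genuine power series. The series converges on \((0,\pi)\), and \(h_\pi\) extends as a solution to all of \((0,\pi)\) because the coefficients are analytic there.

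The zeros of \(h_\pi\) in \((0,\pi)\) are isolated, since \(h_\pi\not\equiv0\) solves a second order linear ODE. Moreover \(h_\pi>0\) on some interval \((\pi-\epsilon,\pi)\). Hence the supremum defining \(a_\lambda\) in \eqref{alambda-def} is at most \(\pi-\epsilon<\pi\), and \(h_\pi>0\) on \((a_\lambda,\pi)\).

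\textbf{Step 2: the barrier and \eqref{H-OK}.} For \(\delta\in(a_\lambda,\pi)\) I define \(h\) by \eqref{h-def}. Abel's identity, i.e.\ reduction of order with Wronskian \(W=(\sin r)^{1-N}\), shows that \(h\) solves the radial equation. From the formula I read off \(h>0\) on \((\delta,\pi)\), \(h(\delta)=0\), and \(h'(\delta)=1/((\sin\delta)^{N-1}h_\pi(\delta))>0\). Since \(\nu=-\partial_r\) on \(\Gamma_\delta\), this gives \(\partial_\nu H=-h'(\delta)<0\). The radial Laplacian formula from Section~\ref{sec2} then converts the ODE into \(L_\lambda H=0\).

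\textbf{Step 3: asymptotics.} Write \(s=\pi-r\) and \(I(r)=\int_\delta^r\ldots\).

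In case (i) the integrand is \(\asymp s^{1-N-2\mu_+}\). The exponent satisfies \(1-N-2\mu_+=-1-2\sqrt{\lambda^*-\lambda}<-1\), so \(I\asymp s^{2-N-2\mu_+}\) near \(\pi\). Since the integral up to a fixed point is positive, the lower bound is fine. Therefore \(h\asymp s^{\mu_+}s^{2-N-2\mu_+}=s^{\mu_-}\). For \(h'\), I bound the two terms separately: \(|h_\pi'|I\lesssim s^{\mu_+-1+2-N-2\mu_+}=s^{\mu_--1}\), and \(1/((\sin r)^{N-1}h_\pi)\asymp s^{1-N-\mu_+}=s^{\mu_--1}\). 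The bound on \(|h_\pi'|\) comes from differentiating the Frobenius series termwise, which gives \(h_\pi'=-\mu_+A_0s^{\mu_+-1}(1+O(s))\).

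In case (ii) we have \(\mu_+=-(N-2)/2\). The integrand is then \(\asymp s^{-1}\), so \(I\asymp\ln(1/s)\) as \(s\to0\). This gives \(h\asymp s^{-(N-2)/2}\ln(1/s)\). For \(h'\), the first term is \(\lesssim s^{-N/2}\ln(1/s)\) and the second is \(\asymp s^{-N/2}\), which is dominated by \(s^{-N/2}\ln(1/s)\) once \(s\) is small.

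\textbf{Main obstacle.} The delicate point is the justification of \(a_\lambda<\pi\) and of global continuation. One has to be sure that \(h_\pi\), defined a priori only near \(\pi\), extends to \((0,\pi)\), so that "zeros in \((0,\pi)\)" makes sense. One also has to be sure its zeros cannot accumulate at \(\pi\). I would settle both with standard linear ODE theory: global existence for analytic coefficients on \((0,\pi)\), plus isolated zeros. Since \(h_\pi(r)\to+\infty\) as \(r\to\pi^-\) (as \(\mu_+<0\)), the zeros indeed stay away from \(\pi\).
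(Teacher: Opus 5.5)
Your proposal is correct and follows the paper's construction: the Frobenius solution \(h_\pi\) at \(r=\pi\) attached to \(\mu_+\), \(a_\lambda\) defined as the last zero of \(h_\pi\), the reduction-of-order formula \eqref{h-def} started at \(\delta\), and the asymptotics of \(h\) and \(h'\) read off from \(\int_\delta^r (\sin s)^{1-N}h_\pi(s)^{-2}\,ds\). Your additional justifications fill in details the paper leaves implicit: the global continuation of \(h_\pi\) to \((0,\pi)\), the fact that zeros cannot accumulate at \(\pi\), the explicit exponent \(-1-2\sqrt{\lambda^*-\lambda}\), and the domination of \(s^{-N/2}\) by \(s^{-N/2}\ln(1/s)\) in case (ii); the only blemish is that you use \(s\) both for \(\pi-r\) and as the integration variable.
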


\subsection{Construction of suitable test functions}

We now construct a family of admissible test functions adapted to the Hardy barrier obtained in Lemma~\ref{L3.1}.

Throughout the rest of the paper, the notation $\tau\gg1$ means that $\tau>0$ is chosen sufficiently large.

Let \(\upzeta\in C_c^\infty(0,\infty)\) be such that
\[
\upzeta\ge 0,\qquad \upzeta\not\equiv0,
\qquad
\supp\upzeta\subset(0,1).
\]
For \(T,m\gg 1\), define
\[
\upzeta_T(t)=\upzeta^m\left(\frac{t}{T}\right),
\qquad t>0.
\]

We next consider a cutoff function \(\psi\in C^\infty([0,\infty))\) satisfying
\[
0\le \psi\le 1,\qquad
\psi=0 \ \text{on }[0,1/2],
\qquad
\psi=1 \ \text{on }[1,\infty).
\]
For \(\delta>a_\lambda\), let \(H(x)=h(r)\) be the radial Hardy barrier given by Lemma~\ref{L3.1}. For \(R,m\gg1\), define
\[
\psi_R(x)=H(x)\psi^m\bigl(R(\pi-r)\bigr),
\qquad \delta\le r=d(o,x)<\pi.
\]

For the critical case, we use a logarithmic cutoff near the singular point \(r=\pi\).
Let \(\eta\in C^\infty(\mathbb{R})\) be such that
\[
0\le \eta\le 1,\qquad
\eta=0 \ \text{on }(-\infty,0],
\qquad
\eta=1 \ \text{on }[1,\infty).
\]
For \(R,m\gg1\), define
\[
\eta_R(x)
=
H(x)\eta^m\left(
\frac{\ln\bigl(R(\pi-r)\bigr)}{\ln\sqrt R}
\right),
\qquad
\delta\le r=d(o,x)<\pi.
\]

Combining the temporal cutoff with the spatial cutoffs, we set, for \(T,R,m\gg1\),
\begin{equation}\label{test1}
\xi(t,x)
=
\upzeta_T(t)\psi_R(x),
\qquad (t,x)\in Q.
\end{equation}
In the critical case, we shall instead use
\begin{equation}\label{test2}
\xi_{\mathrm{crit}}(t,x)
=
\upzeta_T(t)\eta_R(x),
\qquad (t,x)\in Q.
\end{equation}

\begin{Lem}\label{lem:test-admissible}
For \(T,R,m\gg1\), the function \(\xi\) defined by \eqref{test1} belongs to \(\mathcal{X}\).
\end{Lem}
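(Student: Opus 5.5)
We check each requirement in the definition of \(\mathcal{X}\) for \(\xi=\upzeta_T(t)\psi_R(x)\) directly. \textbf{Regularity.} In time, \(\upzeta_T=\upzeta^m(t/T)\) is smooth on \((0,\infty)\) and its support lies in \([Tt_0,Tt_1]\subset(0,T)\), where \(\supp\upzeta\subset[t_0,t_1]\subset(0,1)\). In space, \(H(x)=h(r)\) solves the regular ODE \eqref{H-pb} on \((a_\lambda,\pi)\), since the coefficients \(\cot r\) and \(\lambda/\sin^2 r\) are smooth there. Hence \(h\in C^\infty((a_\lambda,\pi))\), and in particular \(h\) is smooth up to \(r=\delta\). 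Because \(\delta>0\), the map \(x\mapsto r=d(o,x)\) is smooth on \(\{\delta\le r<\pi\}\), so \(H\) is \(C^\infty\) there as a function on \(\mathbb S^N\). The factor \(\psi^m(R(\pi-r))\) is also smooth there, as a composition of smooth maps. Thus \(\psi_R\in C^2(\overline{\Omega_\delta}^{\,\pi})\) (indeed \(C^\infty\)), and \(\xi\in C^{1,2}\).

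\textbf{Compact support in \(Q\).} Since \(\psi=0\) on \([0,1/2]\), we have \(\psi_R(x)=0\) whenever \(\pi-r\le 1/(2R)\). So \(\supp\psi_R\subset\{\delta\le r\le \pi-1/(2R)\}\), which is a closed annulus and hence compact in \(\overline{\Omega_\delta}^{\,\pi}\). This step is exactly where the singularity of \(H\) at \(r=\pi\) is removed: the blow-up of \(h\) from Lemma~\ref{L3.1} is irrelevant because the cutoff vanishes near the pole. It follows that \(\supp\xi\subset[Tt_0,Tt_1]\times\{\delta\le r\le\pi-1/(2R)\}\), a compact subset of \(Q\).

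\textbf{Sign and boundary conditions.} Nonnegativity follows because \(\upzeta\ge0\), \(\psi\ge0\), and \(H\ge0\) on \(\overline{\Omega_\delta}^{\,\pi}\) by \eqref{H-OK}. Now take \(R\) large, specifically \(R\ge 1/(\pi-\delta)\). On a neighborhood of \(\Gamma_\delta\), namely \(\{\delta\le r<\pi-1/R\}\), we have \(R(\pi-r)>1\), so \(\psi=1\) and \(\psi_R=H\) there. Consequently, on \(\Sigma\),
\[
\xi=\upzeta_T\,h(\delta)=0,\qquad \partial_\nu\xi=\upzeta_T(t)\,\partial_\nu H=-\upzeta_T(t)\,h'(\delta)\le0,
\]
using \(\nu=-\partial_r\), \(h'(\delta)>0\) from \eqref{H-pb}, and \(\upzeta_T\ge0\).

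The only point needing some care is smoothness of \(H\) up to \(\Gamma_\delta\) and the requirement that \(R\) be large enough for \(\psi\equiv1\) near \(\Gamma_\delta\). The choice \(R\ge1/(\pi-\delta)\) handles the latter, while \(T,m\) only need \(m\ge1\) (say \(m\ge2\) to be safe). This completes the verification that \(\xi\in\mathcal{X}\).
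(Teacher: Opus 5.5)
Your proposal is correct and follows the paper's proof step by step. Compact support comes from $\supp\upzeta_T\subset(0,T)$ and the vanishing of $\psi^m(R(\pi-r))$ near $r=\pi$, nonnegativity from $H\ge0$, and $\xi=0$ on $\Sigma$ from $H=0$ on $\Gamma_\delta$; then $\partial_\nu\xi=\upzeta_T\,\partial_\nu H\le0$ once $R(\pi-\delta)\ge1$. Your added justification that $H$ is smooth up to $\Gamma_\delta$, since $h$ extends smoothly to $(a_\lambda,\pi)$, fills in a point the paper only asserts.
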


\begin{proof}
Since \(\supp\upzeta_T\subset(0,T)\) and \(\psi_R\) vanishes near \(r=\pi\), the function \(\xi\) has compact support in \(Q\). Moreover, the smoothness of \(\upzeta\), \(\psi\), and \(H\) gives
\[
\xi\in C_c^{1,2}(Q).
\]

By Lemma~\ref{L3.1}, we have
\[
H>0 \text{ in }\Omega_\delta,\qquad
H=0 \text{ on } \Gamma_\delta,\qquad
\partial_\nu H<0 \text{ on } \Gamma_\delta.
\]
In particular, \(H\ge0\) on \(\overline{\Omega_\delta}^{\,\pi}\). Since
\[
\upzeta_T\ge0
\qquad\text{and}\qquad
\psi^m\bigl(R(\pi-r)\bigr)\ge0,
\]
we obtain
\[
\xi\ge0.
\]
Furthermore, since \(H=0\) on \(\Gamma_\delta\), we have
\[
\xi=0
\qquad \text{on } \Sigma.
\]

Finally, for \(R\gg1\), we have \(R(\pi-\delta)\ge1\), and hence
\[
\psi^m(R(\pi-\delta))=1.
\]
Thus, on \(\Sigma\),
\[
\partial_\nu \xi
=
\upzeta_T(t)\,\partial_\nu H
\le0.
\]
Therefore, \(\xi\in\mathcal{X}\).
\end{proof}

\begin{Lem}\label{lem:test-critical-admissible}
For \(T,R,m\gg1\), the function \(\xi_{\mathrm{crit}}\) defined by \eqref{test2} belongs to \(\mathcal{X}\).
\end{Lem}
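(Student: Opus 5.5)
The argument will follow the proof of Lemma~\ref{lem:test-admissible} step by step; the only new ingredient is the behaviour of the logarithmic argument. Set
\[
s_R(r)=\frac{\ln\bigl(R(\pi-r)\bigr)}{\ln\sqrt R},\qquad \delta\le r<\pi ,
\]
so that \(\eta_R(x)=H(x)\eta^m(s_R(r))\). The first step is to locate where the cutoff factor is trivial. Because \(\eta=0\) on \((-\infty,0]\), the factor \(\eta^m(s_R(r))\) vanishes whenever \(R(\pi-r)\le 1\), that is, for \(\pi-r\le 1/R\). Because \(\eta=1\) on \([1,\infty)\), it equals \(1\) whenever \(R(\pi-r)\ge\sqrt R\), that is, for \(\pi-r\ge 1/\sqrt R\). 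Hence \(\eta_R\) vanishes identically on the geodesic ball \(\{\pi-r\le 1/R\}\) around \(-o\), and \(\eta_R=H\) on \(\{\delta\le r\le \pi-1/\sqrt R\}\).

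Compact support and regularity come next. For \(R\gg1\) we have \(\pi-1/\sqrt R>\delta\). Since \(\supp\upzeta_T\subset(0,T)\), the support of \(\xi_{\mathrm{crit}}\) lies in \([\epsilon T,T(1-\epsilon)]\times\{\delta\le r\le \pi-1/R\}\) for some \(\epsilon>0\), which is a compact subset of \(Q\). On the region \(\{\delta\le r<\pi\}\), away from \(r=\pi\), the function \(r\mapsto s_R(r)\) is smooth, since \(\pi-r>0\) there. The radial barrier \(h\) is smooth on \([\delta,\pi)\), being a solution of a regular ODE with smooth coefficients there, and \(\eta\in C^\infty\). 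The radial function \(r\mapsto h(r)\eta^m(s_R(r))\) is therefore smooth on \([\delta,\pi)\). Near the pole \(o\) nothing needs to be checked, because \(r\ge\delta>0\). Combined with the smoothness of \(\upzeta_T\) in \(t\), this gives \(\xi_{\mathrm{crit}}\in C_c^{1,2}(Q)\).

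Nonnegativity and the boundary conditions follow as in Lemma~\ref{lem:test-admissible}, using Lemma~\ref{L3.1}. We have \(H\ge0\), \(\upzeta_T\ge0\) and \(\eta^m\ge0\), so \(\xi_{\mathrm{crit}}\ge0\), and \(H=0\) on \(\Gamma_\delta\) gives \(\xi_{\mathrm{crit}}=0\) on \(\Sigma\). For the normal derivative, choose \(R\) so large that \(\pi-\delta>1/\sqrt R\), i.e.\ \(R>(\pi-\delta)^{-2}\). Then \(\eta^m(s_R(r))\equiv1\) on a neighbourhood \([\delta,\pi-1/\sqrt R)\) of \(r=\delta\), so \(\eta_R=H\) there and \(\partial_\nu\xi_{\mathrm{crit}}=\upzeta_T(t)\,\partial_\nu H\le0\) on \(\Sigma\).

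The only point needing care is this choice of \(R\) relative to \(\delta\). One must ensure that the transition layer \(1/R<\pi-r<1/\sqrt R\) stays strictly inside \(\Omega_\delta\), so that the cutoff is identically \(1\) near \(\Gamma_\delta\) and does not affect the normal derivative there. Once that holds, \(\xi_{\mathrm{crit}}\in\mathcal X\).
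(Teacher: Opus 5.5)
Your proposal is correct and is essentially the paper's proof: your condition $R>(\pi-\delta)^{-2}$ amounts to the paper's requirement $\ln\bigl(R(\pi-\delta)\bigr)/\ln\sqrt R\ge1$, and you spell out the compact-support and regularity details that the paper leaves implicit. As a minor aside, this choice of $R$ is not essential for the normal-derivative condition: since $H=0$ on $\Gamma_\delta$, one already has $\partial_\nu\bigl(H\,\eta^m(s_R)\bigr)=\eta^m(s_R)\,\partial_\nu H\le0$ there for every $R>1$.
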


\begin{proof}
The proof is analogous to that of Lemma~\ref{lem:test-admissible}. Indeed, \(\supp\upzeta_T\subset(0,T)\), and \(\eta_R\) vanishes near \(r=\pi\). Hence, \(\xi_{\mathrm{crit}}\) has compact support in \(Q\). Moreover, \(\xi_{\mathrm{crit}}\ge0\), and since \(H=0\) on \(\Gamma_\delta\), we have
\[
\xi_{\mathrm{crit}}=0
\qquad \text{on } \Sigma.
\]
Finally, for \(R\gg1\),
\[
\frac{\ln\bigl(R(\pi-\delta)\bigr)}{\ln\sqrt R}\ge1,
\]
and hence
\[
\eta^m\left(
\frac{\ln\bigl(R(\pi-\delta)\bigr)}{\ln\sqrt R}
\right)=1.
\]
Therefore, on \(\Sigma\),
\[
\partial_\nu \xi_{\mathrm{crit}}
=
\upzeta_T(t)\,\partial_\nu H
\le0.
\]
Thus, \(\xi_{\mathrm{crit}}\in\mathcal{X}\).
\end{proof}

\subsection{Auxiliary estimates}

In this subsection, we establish the auxiliary estimates needed in the proof of the main result.

Throughout the rest of the paper, \(C\) denotes a positive constant independent of the scaling parameters \(T\) and \(R\). Its value may change from line to line.

In this subsection, let \(\delta\in(a_\lambda,\pi)\), and let \(H(x)=h(r)\) be the radial Hardy barrier given by Lemma~\ref{L3.1}.

\begin{Lem}\label{L3.4}
For \(R,m\gg1\), the following estimates hold:
\[
\int_{\Omega_\delta}
(\sin r)^{-\frac{\alpha}{p-1}}\psi_R(x)\,d\mu
\le
C\ln R \left(
\ln R+
R^{\frac{\alpha}{p-1}-N-\mu_-}
\right),
\]
and, for the logarithmic cutoff,
\[
\int_{\Omega_\delta}
(\sin r)^{-\frac{\alpha}{p-1}}\eta_R(x)\,d\mu
\le
C\ln R \left(
\ln R+
R^{\frac{\alpha}{p-1}-N-\mu_-}
\right).
\]
\end{Lem}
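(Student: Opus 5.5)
The plan is to pass to geodesic polar coordinates and reduce both integrals to one–dimensional integrals in \(s=\pi-r\). Since \(\psi_R\) and \(\eta_R\) are radial and \(d\mu=(\sin r)^{N-1}dr\,d\sigma(\theta)\), we have
\[
\int_{\Omega_\delta}(\sin r)^{-\frac{\alpha}{p-1}}\psi_R\,d\mu
=|\mathbb S^{N-1}|\int_\delta^\pi (\sin r)^{N-1-\frac{\alpha}{p-1}}h(r)\,\psi^m\bigl(R(\pi-r)\bigr)\,dr .
\]
We then split the range of integration as \([\delta,\pi-\varepsilon_0]\cup(\pi-\varepsilon_0,\pi)\), where \(\varepsilon_0>0\) is fixed so that the asymptotics of Lemma~\ref{L3.1} hold on \((\pi-\varepsilon_0,\pi)\).

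On the compact piece \([\delta,\pi-\varepsilon_0]\), \(h\) is continuous and \(\sin r\) is bounded above and below by positive constants. Since \(0\le\psi^m\le1\), this part is bounded by a constant \(C\), and \(C\le C(\ln R)^2\) for \(R\gg1\).

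Near the pole we use \(\sin r\asymp s\) and \(\psi^m(Rs)=0\) for \(s<1/(2R)\), which confines the integral to \(1/(2R)<s<\varepsilon_0\).

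In case (i) of Lemma~\ref{L3.1}, \(h\asymp s^{\mu_-}\), so the integrand is bounded by \(C s^{\gamma}\) with
\[
\gamma=N-1-\tfrac{\alpha}{p-1}+\mu_-.
\]
Then
\[
\int_{1/(2R)}^{\varepsilon_0}s^{\gamma}\,ds\le
\begin{cases}
C, & \gamma>-1,\\
C\ln R, & \gamma=-1,\\
C R^{-\gamma-1}=C R^{\frac{\alpha}{p-1}-N-\mu_-}, & \gamma<-1.
\end{cases}
\]
In every case this is at most \(C\ln R\,(\ln R+R^{\frac{\alpha}{p-1}-N-\mu_-})\) for \(R\gg1\).

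In case (ii), \(h\asymp s^{-\frac{N-2}{2}}\ln(1/s)\). On the relevant range \(\ln(1/s)\le \ln(2R)\le C\ln R\), so we pull out a factor \(C\ln R\) and apply the same trichotomy with \(\mu_-=-\frac{N-2}{2}\). This gives \(C\ln R\,(1+\ln R+R^{\frac{\alpha}{p-1}-N-\mu_-})\). This step explains the extra \(\ln R\) factor in the statement: the factor \(\ln R\) in front accommodates the logarithm in \(h\), and the inner \(\ln R\) covers the borderline exponent \(\gamma=-1\).

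For \(\eta_R\), the cutoff \(\eta^m\bigl(\ln(Rs)/\ln\sqrt R\bigr)\) vanishes unless \(\ln(Rs)>0\), that is, \(s>1/R\), and it is bounded by \(1\). The identical argument therefore applies with lower limit \(1/R\) in place of \(1/(2R)\), yielding the same bound.

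The main difficulty is only bookkeeping. We must make sure that the three regimes of \(\gamma\) and the logarithmic factor in the critical Hardy case \(\lambda=\lambda^*\) are all absorbed into the single stated form \(C\ln R\,(\ln R+R^{\frac{\alpha}{p-1}-N-\mu_-})\), using \(1\le\ln R\) for \(R\gg1\).
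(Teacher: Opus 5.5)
Your proposal is correct and follows the same route as the paper. You pass to polar coordinates, bound the cutoff by $1$, and bound the compact piece by a constant. Near the pole you use $\sin r\asymp\pi-r$ together with the asymptotics of Lemma~\ref{L3.1} on $s\in\bigl(1/(2R),\varepsilon_0\bigr)$, or on $s\in\bigl(1/R,\varepsilon_0\bigr)$ for $\eta_R$. The only cosmetic difference is that the paper uses the single bound $h(r)\le C(\pi-r)^{\mu_-}\ln\frac{1}{\pi-r}$ for all $0<\lambda\le\lambda^*$ and leaves the case analysis on the exponent implicit, whereas you treat the two cases of Lemma~\ref{L3.1} separately and spell out the exponent cases explicitly.
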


\begin{proof}
Let
\[
I_R=
\int_{\Omega_\delta}
(\sin r)^{-\frac{\alpha}{p-1}}\psi_R(x)\,d\mu.
\]
Since
\[
\supp \psi_R
\subset
\left\{
x\in \mathbb S^N:
\delta\le r\le \pi-\frac{1}{2R}
\right\},
\]
using geodesic polar coordinates on \(\mathbb S^N\) and the definition of \(\psi_R\), we obtain
\[
\begin{aligned}
I_R
&\le
C\int_\delta^{\pi-\frac{1}{2R}}
(\sin r)^{N-1-\frac{\alpha}{p-1}}
h(r)\psi^m(R(\pi-r))\,dr\\
&\le C\int_\delta^{\pi-\frac{1}{2R}}
(\sin r)^{N-1-\frac{\alpha}{p-1}}
h(r)\,dr.
\end{aligned}
\]
Choose \(r_0\in(\delta,\pi)\) sufficiently close to \(\pi\). Then
\begin{equation}\label{s1-est}
I_R \le C\left(
\int_\delta^{r_0}
(\sin r)^{N-1-\frac{\alpha}{p-1}}h(r)\,dr
+
\int_{r_0}^{\pi-\frac{1}{2R}}
(\sin r)^{N-1-\frac{\alpha}{p-1}}h(r)\,dr
\right).
\end{equation}
On \([\delta,r_0]\), the integrand is bounded. Hence,
\begin{equation}\label{s2-est}
\int_\delta^{r_0}
(\sin r)^{N-1-\frac{\alpha}{p-1}}h(r)\,dr
\le C.
\end{equation}
We now estimate the integral over \((r_0,\pi-1/(2R))\). By Lemma~\ref{L3.1}, for \(0<\lambda\le \lambda^*\), we have
\[
h(r)\le C(\pi-r)^{\mu_-}\ln\frac{1}{\pi-r}
\qquad \text{as } r\to\pi^-.
\]
Moreover, \(\sin r\asymp \pi-r\) as \(r\to\pi^-\). Therefore,
\begin{equation}\label{s3-est}
\begin{aligned}
\int_{r_0}^{\pi-\frac{1}{2R}}
(\sin r)^{N-1-\frac{\alpha}{p-1}}h(r)\,dr
&\le C\int_{r_0}^{\pi-\frac{1}{2R}}
(\pi-r)^{N-1-\frac{\alpha}{p-1}+\mu_-}
\ln\frac{1}{\pi-r}\,dr\\
&\le C\ln R \left(
\ln R+
R^{\frac{\alpha}{p-1}-N-\mu_-}
\right).
\end{aligned}
\end{equation}
Combining \eqref{s1-est}, \eqref{s2-est}, and \eqref{s3-est}, we obtain the first claimed estimate.

The same argument applies to \(\eta_R\). Indeed, since \(\eta_R=0\) whenever
\[
R(\pi-r)\le 1,
\]
or equivalently whenever
\[
\pi-\frac1R\le r<\pi,
\]
the preceding proof can be repeated with \(\pi-\frac1R\) in place of \(\pi-\frac1{2R}\). The resulting estimate is unchanged.
\end{proof}

\begin{Lem}\label{L3.5}
For \(R,m\gg1\), the following estimate holds:
\[
\int_{\Omega_\delta}
(\sin r)^{-\frac{\alpha}{p-1}}
\psi_R(x)^{-\frac1{p-1}}
\left|L_\lambda\psi_R(x)\right|^{\frac{p}{p-1}}
\,d\mu
\le
C(\ln R)^{\frac{p}{p-1}}\,
R^{\frac{2p}{p-1}-N+\frac{\alpha}{p-1}-\mu_-}.
\]
\end{Lem}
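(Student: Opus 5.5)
The key observation is that \(L_\lambda H=0\) in \(\Omega_\delta\), so \(L_\lambda\psi_R\) vanishes wherever the cutoff is constant. Write \(\phi_R(r)=\psi^m(R(\pi-r))\), so that \(\psi_R=H\phi_R\). The product rule for the Laplace--Beltrami operator gives
\[
L_\lambda\psi_R=\phi_R\,L_\lambda H+2\langle\nabla H,\nabla\phi_R\rangle+H\Delta_{\mathbb S^N}\phi_R
=2h'(r)\phi_R'(r)+h(r)\bigl(\phi_R''(r)+(N-1)\cot r\,\phi_R'(r)\bigr).
\]
Since \(\psi\equiv1\) on \([1,\infty)\) and \(\psi\equiv0\) on \([0,1/2]\), this expression is supported in the thin layer
\[
A_R=\Bigl\{\tfrac{1}{2R}\le\pi-r\le\tfrac1R\Bigr\}.
\]
For \(R\gg1\) this layer lies inside \((r_0,\pi)\), where the asymptotics of Lemma~\ref{L3.1} are available. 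In particular, no contribution comes from a neighborhood of \(\Gamma_\delta\), which is important because \(H\) vanishes there and \(\psi_R^{-1/(p-1)}\) would otherwise be singular.

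Next I bound the derivatives of the cutoff on \(A_R\). Put \(s=R(\pi-r)\in[1/2,1]\). Then
\[
|\phi_R'|\le CmR\,\psi^{m-1}(s),\qquad |\phi_R''|\le CR^2\,\psi^{m-2}(s).
\]
Also \(|\cot r|\le C/(\pi-r)\le CR\) on \(A_R\). Combining these with Lemma~\ref{L3.1}, and using the uniform bounds \(h\le C(\pi-r)^{\mu_-}\ln\frac1{\pi-r}\) and \(|h'|\le C(\pi-r)^{\mu_--1}\ln\frac1{\pi-r}\) (the logarithm is needed only when \(\lambda=\lambda^*\)), I get on \(A_R\)
\[
|L_\lambda\psi_R|\le C R^{2-\mu_-}\ln R\;\psi^{m-2}(s).
\]
For the lower bound in the denominator I use \(h\asymp(\pi-r)^{\mu_-}\) in case (i), and \(h\ge c(\pi-r)^{\mu_-}\ln\frac1{\pi-r}\ge c(\pi-r)^{\mu_-}\) in case (ii). Either way \(\psi_R\ge cR^{-\mu_-}\psi^m(s)\), and hence
\[
\psi_R^{-\frac1{p-1}}\le CR^{\frac{\mu_-}{p-1}}\psi^{-\frac{m}{p-1}}(s).
\]

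Now I assemble the integrand. Its powers of \(\psi\) combine to \(\psi^{(m-2)\frac{p}{p-1}-\frac m{p-1}}=\psi^{m-\frac{2p}{p-1}}\), which is bounded by \(1\) once \(m\ge\frac{2p}{p-1}\). This step, choosing \(m\) large so that the negative power of \(\psi\) from \(\psi_R^{-1/(p-1)}\) is absorbed, is the only delicate point; the rest is bookkeeping.

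On \(A_R\) we also have \((\sin r)^{-\alpha/(p-1)}\le CR^{\alpha/(p-1)}\), since \(\sin r\asymp\pi-r\asymp R^{-1}\) there. The measure satisfies \(\mu(A_R)\le C\int_{A_R}(\pi-r)^{N-1}dr\le CR^{-N}\). Multiplying everything, the integral is bounded by
\[
C(\ln R)^{\frac p{p-1}}\,R^{\frac{\alpha}{p-1}}\,R^{\frac{\mu_-}{p-1}}\,R^{(2-\mu_-)\frac p{p-1}}\,R^{-N}.
\]
The exponent of \(R\) simplifies as
\[
\frac{\mu_-}{p-1}-\frac{p\mu_-}{p-1}=-\mu_-,
\]
which gives exactly
\[
C(\ln R)^{\frac p{p-1}}R^{\frac{2p}{p-1}-N+\frac{\alpha}{p-1}-\mu_-},
\]
as claimed in Lemma~\ref{L3.5}.
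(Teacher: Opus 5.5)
Your proof is correct and follows essentially the same route as the paper. You use $L_\lambda H=0$ to reduce $L_\lambda\psi_R$ to the cutoff-derivative terms supported in the layer $\frac{1}{2R}\le\pi-r\le\frac1R$, bound these terms via Lemma~\ref{L3.1}, absorb the negative power of $\psi$ by taking $m\ge\frac{2p}{p-1}$, and multiply by the $O(R^{-N})$ measure of the layer. Your explicit lower bound $h\ge c(\pi-r)^{\mu_-}$ when $\lambda=\lambda^*$ is a small detail that the paper leaves implicit.
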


\begin{proof}
Set
\[
J_R=\int_{\Omega_\delta}
(\sin r)^{-\frac{\alpha}{p-1}}
\psi_R(x)^{-\frac1{p-1}}
\left|L_\lambda\psi_R(x)\right|^{\frac{p}{p-1}}
\,d\mu.
\]
We write
\[
\psi_R(x)=H(x)\chi_R(r),
\qquad
\chi_R(r)=\psi^m(R(\pi-r)).
\]
Since \(L_\lambda H=0\), by Lemma~\ref{L3.1}, we have
\[
L_\lambda\psi_R
=
H\Delta_{\mathbb S^N}\chi_R
+
2\left\langle \nabla_{\mathbb S^N}H,\nabla_{\mathbb S^N}\chi_R\right\rangle.
\]
Since all functions are radial, this identity becomes
\begin{equation}\label{Llambda}
L_\lambda\psi_R
=
h(r)\left(\chi_R''(r)+(N-1)\cot r\,\chi_R'(r)\right)
+
2h'(r)\chi_R'(r).
\end{equation}
Moreover,
\[
\supp \chi_R'\cup\supp \chi_R''
\subset
\left\{x\in \mathbb S^N:
\pi-\frac1R \le r\le \pi-\frac1{2R}
\right\}.
\]
Therefore, using geodesic polar coordinates on \(\mathbb S^N\), we obtain
\begin{equation}\label{hot}
\begin{aligned}
J_R
&\le
C \int_{\pi-\frac1R}^{\pi-\frac1{2R}}
(\sin r)^{N-1-\frac{\alpha}{p-1}}
h(r)^{-\frac1{p-1}}\chi_R(r)^{-\frac1{p-1}}
\left|L_\lambda\psi_R(x)\right|^{\frac{p}{p-1}}
\,dr .
\end{aligned}
\end{equation}

For \(R\gg1\) and \(\pi-\frac1R<r<\pi-\frac1{2R}\), we have
\[
\sin r\asymp \pi-r\asymp \frac1R,
\qquad
|\cot r|\le CR.
\]
Moreover,
\[
|\chi_R'(r)|\le CR\psi^{m-1}(R(\pi-r)),
\qquad
|\chi_R''(r)|\le CR^2\psi^{m-2}(R(\pi-r)).
\]
By Lemma~\ref{L3.1}, in the same range of \(r\),
\[
h(r)\le C R^{-\mu_-}\ln R,
\qquad
|h'(r)|\le C R^{1-\mu_-}\ln R.
\]
Hence, from \eqref{Llambda},
\[
|L_\lambda\psi_R(x)|
\le
C R^{2-\mu_-}\ln R\,\psi^{m-2}(R(\pi-r)).
\]
Also, Lemma~\ref{L3.1} gives
\[
h(r)^{-\frac1{p-1}}
\le
C R^{\frac{\mu_-}{p-1}}.
\]
Consequently,
\[
h(r)^{-\frac1{p-1}}
\left|L_\lambda\psi_R(x)\right|^{\frac{p}{p-1}}
\le
C
R^{\frac{\mu_-}{p-1}+\frac{p}{p-1}(2-\mu_-)}
(\ln R)^{\frac{p}{p-1}}
\psi^{(m-2)\frac{p}{p-1}}(R(\pi-r)).
\]
Since
\[
\chi_R(r)=\psi^m(R(\pi-r)),
\]
we have
\[
\chi_R(r)^{-\frac1{p-1}}
\psi^{(m-2)\frac{p}{p-1}}(R(\pi-r))
=
\psi^{\frac{m(p-1)-2p}{p-1}}(R(\pi-r)).
\]
Thus, if
\[
m\ge \frac{2p}{p-1},
\]
then
\[
\chi_R(r)^{-\frac1{p-1}}
\psi^{(m-2)\frac{p}{p-1}}(R(\pi-r))
\le 1.
\]
Therefore, using again \(\sin r\asymp R^{-1}\), we obtain from \eqref{hot}
\[
J_R
\le
C(\ln R)^{\frac{p}{p-1}}
R^{1+\frac{2p}{p-1}-N+\frac{\alpha}{p-1}-\mu_-}
\int_{\pi-\frac1R}^{\pi-\frac1{2R}}dr.
\]
Since the length of the interval is of order \(R^{-1}\), we get
\[
J_R
\le
C(\ln R)^{\frac{p}{p-1}}
R^{\frac{2p}{p-1}-N+\frac{\alpha}{p-1}-\mu_-}.
\]
This completes the proof.
\end{proof}

\begin{Lem}\label{L3.6}
Assume that
\begin{equation}\label{cd-pcrit}
\frac{2p}{p-1}-N+\frac{\alpha}{p-1}-\mu_-=0.
\end{equation}
For \(R,m\gg1\), the following estimates hold:
\begin{itemize}
\item[\rm(i)] If \(0<\lambda<\lambda^*\), then
\[
\int_{\Omega_\delta}
(\sin r)^{-\frac{\alpha}{p-1}}
\eta_R(x)^{-\frac1{p-1}}
\left|L_\lambda\eta_R(x)\right|^{\frac{p}{p-1}}
\,d\mu
\le
C(\ln R)^{1-\frac{p}{p-1}}.
\]

\item[\rm(ii)] If \(\lambda=\lambda^*\), then
\[
\int_{\Omega_\delta}
(\sin r)^{-\frac{\alpha}{p-1}}
\eta_R(x)^{-\frac1{p-1}}
\left|L_\lambda\eta_R(x)\right|^{\frac{p}{p-1}}
\,d\mu
\le
C(\ln R)^{2-\frac{p}{p-1}}.
\]
\end{itemize}
\end{Lem}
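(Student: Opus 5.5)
We follow the scheme of Lemma~\ref{L3.5}, now with the logarithmic cutoff. Write
\[
\eta_R(x)=H(x)\theta_R(r),\qquad \theta_R(r)=\eta^m\bigl(s(r)\bigr),\qquad s(r)=\frac{\ln(R(\pi-r))}{\ln\sqrt R}.
\]
Since \(L_\lambda H=0\) (Lemma~\ref{L3.1}), the analogue of \eqref{Llambda} holds:
\[
L_\lambda\eta_R=h\bigl(\theta_R''+(N-1)\cot r\,\theta_R'\bigr)+2h'\theta_R'.
\]
The derivatives of \(\theta_R\) are supported where \(0\le s\le1\), that is, on the layer \(\frac1R\le \pi-r\le \frac1{\sqrt R}\).

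\emph{Step 1: pointwise bounds on the layer.} Put \(\rho=\pi-r\). Then \(s'(r)=-\frac{2}{\rho\ln R}\) and \(s''(r)=-\frac{2}{\rho^2\ln R}\). Hence
\[
|\theta_R'|\le \frac{C}{\rho\ln R}\,\eta^{m-1}(s),\qquad
|\theta_R''|\le C\Bigl(\frac{1}{\rho^2\ln R}+\frac{1}{\rho^2(\ln R)^2}\Bigr)\eta^{m-2}(s)\le \frac{C}{\rho^2\ln R}\,\eta^{m-2}(s).
\]
We also have \(|\cot r|\le C/\rho\).

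In case (i), Lemma~\ref{L3.1} gives \(h\asymp\rho^{\mu_-}\) and \(|h'|\le C\rho^{\mu_--1}\), so
\[
|L_\lambda\eta_R|\le \frac{C}{\ln R}\,\rho^{\mu_--2}\,\eta^{m-2}(s).
\]
In case (ii), \(h\asymp \rho^{\mu_-}\ln\frac1\rho\) and \(|h'|\le C\rho^{\mu_--1}\ln\frac1\rho\). The \(\ln\frac1\rho\) factor in \(h\) and in \(h'\) is the same. So
\[
|L_\lambda\eta_R|\le \frac{C}{\ln R}\,\rho^{\mu_--2}\ln\tfrac1\rho\;\eta^{m-2}(s),
\qquad
\eta_R^{-\frac1{p-1}}\le C\Bigl(\rho^{\mu_-}\ln\tfrac1\rho\Bigr)^{-\frac1{p-1}}\eta^{-\frac m{p-1}}(s).
\]

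As in Lemma~\ref{L3.5}, choosing \(m\ge\frac{2p}{p-1}\) makes the powers of \(\eta\) combine to \(\eta^{\frac{m(p-1)-2p}{p-1}}\le1\).

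\emph{Step 2: integration.} Using \(\sin r\asymp\rho\) and \(d\mu\asymp\rho^{N-1}d\rho\,d\sigma\), the integrand in case (i) is bounded by
\[
C(\ln R)^{-\frac p{p-1}}\,\rho^{\,N-1-\frac{\alpha}{p-1}-\frac{\mu_-}{p-1}+\frac{p}{p-1}(\mu_--2)}.
\]
The exponent simplifies to \(N-1-\frac{\alpha}{p-1}+\mu_--\frac{2p}{p-1}\). By \eqref{cd-pcrit} this equals exactly \(-1\). Hence the integral is at most
\[
C(\ln R)^{-\frac p{p-1}}\int_{1/R}^{1/\sqrt R}\frac{d\rho}{\rho}=\tfrac{C}{2}(\ln R)^{1-\frac p{p-1}},
\]
which is (i).

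In case (ii), the logarithmic weights contribute
\[
\Bigl(\ln\tfrac1\rho\Bigr)^{\frac p{p-1}-\frac1{p-1}}=\ln\tfrac1\rho\le\ln R
\]
on the layer. This gives one extra factor \(\ln R\), and therefore the bound \(C(\ln R)^{2-\frac p{p-1}}\).

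\emph{Main obstacle.} The only delicate point is the exponent bookkeeping showing that \eqref{cd-pcrit} turns the radial integrand into exactly \(\rho^{-1}\). This is what converts the layer integral into \(\ln R\) instead of a power of \(R\). A secondary check is that the \(\eta''\) and \((\eta')^2\) terms, which carry \((\ln R)^{-2}\), are dominated by the \((\ln R)^{-1}\) terms.
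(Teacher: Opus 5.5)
Your proposal is correct and follows essentially the same route as the paper. In both, you factor out $H$ via $L_\lambda H=0$, bound $\theta_R'$ and $\theta_R''$ on the layer $\frac1R\le\pi-r\le\frac1{\sqrt R}$ with the $(\ln R)^{-1}$ gain, use \eqref{cd-pcrit} to reduce the radial integrand to $(\pi-r)^{-1}$, and pick up the extra logarithm from the barrier when $\lambda=\lambda^*$. The only cosmetic difference is in case (ii): you bound $\ln\frac1\rho\le\ln R$ before integrating, whereas the paper integrates $\rho^{-1}\ln\frac1\rho$ directly, and both give $C(\ln R)^2$.
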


\begin{proof}
We only indicate the changes with respect to the proof of Lemma~\ref{L3.5}. Write
\[
\eta_R(x)=H(x)\theta_R(r),
\qquad
\theta_R(r)=
\eta^m\left(
\frac{\ln(R(\pi-r))}{\ln\sqrt R}
\right).
\]
Then
\[
\supp \theta_R'\cup\supp \theta_R''
\subset
\left\{
x\in\mathbb S^N:
\pi-\frac1{\sqrt R}<r<\pi-\frac1R
\right\}.
\]
Moreover, on this set,
\[
|\theta_R'(r)|
\le
\frac{C}{(\pi-r)\ln R}
\eta^{m-1}\left(
\frac{\ln(R(\pi-r))}{\ln\sqrt R}
\right),
\]
and
\[
|\theta_R''(r)|
\le
\frac{C}{(\pi-r)^2\ln R}
\eta^{m-2}\left(
\frac{\ln(R(\pi-r))}{\ln\sqrt R}
\right).
\]
Repeating the argument of Lemma~\ref{L3.5}, and using \(m\gg1\), we obtain
\[
\begin{aligned}
&\int_{\Omega_\delta}
(\sin r)^{-\frac{\alpha}{p-1}}
\eta_R^{-\frac1{p-1}}
|L_\lambda\eta_R|^{\frac{p}{p-1}}
\,d\mu \\
&\qquad\le
\frac{C}{(\ln R)^{\frac{p}{p-1}}}
\int_{\pi-\frac1{\sqrt R}}^{\pi-\frac1R}
(\pi-r)^{N-1-\frac{\alpha}{p-1}+\mu_- -\frac{2p}{p-1}}
\,\Lambda_\lambda(r)\,dr,
\end{aligned}
\]
where
\[
\Lambda_\lambda(r)=
\begin{cases}
1, & 0<\lambda<\lambda^*,\\[4pt]
\displaystyle \ln\frac1{\pi-r}, & \lambda=\lambda^*.
\end{cases}
\]
By \eqref{cd-pcrit},
\[
N-1-\frac{\alpha}{p-1}+\mu_- -\frac{2p}{p-1}=-1.
\]
Therefore, if \(0<\lambda<\lambda^*\), then
\[
\int_{\pi-\frac1{\sqrt R}}^{\pi-\frac1R}
\frac{dr}{\pi-r}
\le C\ln R,
\]
and hence
\[
\int_{\Omega_\delta}
(\sin r)^{-\frac{\alpha}{p-1}}
\eta_R^{-\frac1{p-1}}
|L_\lambda\eta_R|^{\frac{p}{p-1}}
\,d\mu
\le C(\ln R)^{1-\frac{p}{p-1}}.
\]
If \(\lambda=\lambda^*\), then
\[
\int_{\pi-\frac1{\sqrt R}}^{\pi-\frac1R}
\frac1{\pi-r}\ln\frac1{\pi-r}\,dr
\le C(\ln R)^2,
\]
and therefore
\[
\int_{\Omega_\delta}
(\sin r)^{-\frac{\alpha}{p-1}}
\eta_R^{-\frac1{p-1}}
|L_\lambda\eta_R|^{\frac{p}{p-1}}
\,d\mu
\le C(\ln R)^{2-\frac{p}{p-1}}.
\]
\end{proof}

The following estimates follow directly from the definition of the cutoff function \(\upzeta_T\).

\begin{Lem}\label{L3.7}
For \(T,m\gg1\), we have
\[
\int_0^\infty \upzeta_T(t)\,dt = C_{\upzeta,m} T,
\qquad
C_{\upzeta,m}=\int_0^\infty \upzeta^m(s)\,ds>0,
\]
and
\[
\int_0^\infty
\upzeta_T(t)^{-\frac1{p-1}}
|\upzeta_T'(t)|^{\frac{p}{p-1}}\,dt
\le
CT^{-\frac1{p-1}}.
\]
\end{Lem}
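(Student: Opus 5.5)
The plan is to use the substitution \(s=t/T\) for both identities; everything then reduces to one-dimensional integrals of \(\upzeta\) that do not depend on \(T\).

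\emph{First identity.} Since \(\upzeta_T(t)=\upzeta^m(t/T)\) and \(\supp\upzeta\subset(0,1)\), the change of variables \(t=Ts\) gives
\[
\int_0^\infty \upzeta_T(t)\,dt=T\int_0^\infty\upzeta^m(s)\,ds .
\]
The constant \(C_{\upzeta,m}\) is positive because \(\upzeta\ge0\) is continuous and \(\upzeta\not\equiv0\). Hence \(\upzeta^m\) is positive on some open interval.

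\emph{Second estimate.} Differentiating gives
\[
\upzeta_T'(t)=\frac{m}{T}\,\upzeta^{m-1}(t/T)\,\upzeta'(t/T).
\]
On the set where \(\upzeta(t/T)>0\), which is the only place the integrand is nonzero (with the usual convention), we get
\[
\upzeta_T^{-\frac1{p-1}}|\upzeta_T'|^{\frac p{p-1}}
=\Bigl(\frac mT\Bigr)^{\frac p{p-1}}\upzeta^{\,(m-1)\frac p{p-1}-\frac m{p-1}}(t/T)\,|\upzeta'(t/T)|^{\frac p{p-1}}.
\]
The exponent of \(\upzeta\) simplifies to \(\frac{m(p-1)-p}{p-1}=m-\frac p{p-1}\). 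This is nonnegative once \(m\ge \frac p{p-1}\), which is where the requirement \(m\gg1\) enters. For such \(m\), the integrand is bounded by \(C T^{-\frac p{p-1}}\|\upzeta\|_\infty^{m-\frac p{p-1}}\|\upzeta'\|_\infty^{\frac p{p-1}}\). It is also supported in \((0,T)\), so integrating in \(t\) gives
\[
\int_0^\infty \upzeta_T^{-\frac1{p-1}}|\upzeta_T'|^{\frac p{p-1}}\,dt\le C\,T\cdot T^{-\frac p{p-1}}=CT^{-\frac1{p-1}}.
\]

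The only delicate point is making sense of the negative power of \(\upzeta_T\). I plan to handle it by choosing \(m\) large enough that the combined power of \(\upzeta\) is nonnegative, and by interpreting the integrand as zero wherever \(\upzeta_T=0\). There \(\upzeta_T'\) also vanishes, so this convention is consistent. The remaining steps are routine.
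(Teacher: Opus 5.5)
Your proposal is correct and matches the paper's approach. The paper writes no proof, saying only that the estimates follow directly from the definition of \(\upzeta_T\); your rescaling \(s=t/T\), the exponent computation \(m-\frac{p}{p-1}\ge 0\) for \(m\ge\frac{p}{p-1}\), and your convention of setting the integrand to zero where \(\upzeta_T=0\) (consistent, since \(\upzeta_T'\) vanishes there) are exactly that direct verification.
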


\section{Proof of the main result}\label{sec4}

This section is devoted to the proof of Theorem~\ref{T1.2}. We first prove the nonexistence assertions and then establish the existence result.

\begin{proof}[Proof of Theorem~\ref{T1.2}]
\noindent\textbf{Part (i).}
We argue by contradiction and suppose that \eqref{P} admits a weak solution
\(u \in L^p_{\mathrm{loc}}(Q)\) for some function
\(f\in L^1_{\mathrm{loc}}(\overline{\Omega_\delta}^{\,\pi})\) satisfying
\(f\ge0\) and \(f\not\equiv0\).
Let \(\xi \in \mathcal{X}\). By Definition~\ref{def-ws}, we have
\[
\int_{Q} (\sin r)^{\alpha} |u|^p \, \xi \, d\mu \, dt
+ \int_{Q} f(x) \, \xi \, d\mu \, dt
\le
\int_{Q} |u| \, |\partial_t \xi| \, d\mu \, dt
+ \int_{Q} |u|\, |L_\lambda \xi|\,d\mu \, dt.
\]
By Young's inequality,
\[
\int_{Q} |u| \, |\partial_t \xi| \, d\mu \, dt
\le
\frac{1}{2}\int_{Q} (\sin r)^{\alpha} |u|^p \, \xi \, d\mu \, dt
+
C\int_{Q} (\sin r)^{-\frac{\alpha}{p-1}}
|\partial_t \xi|^{\frac{p}{p-1}}
\xi^{-\frac{1}{p-1}}\,d\mu \, dt,
\]
and
\[
\int_{Q} |u|\, |L_\lambda \xi|\,d\mu \, dt
\le
\frac{1}{2}\int_{Q} (\sin r)^{\alpha} |u|^p \, \xi \, d\mu \, dt
+
C\int_{Q} (\sin r)^{-\frac{\alpha}{p-1}}
|L_\lambda \xi|^{\frac{p}{p-1}}
\xi^{-\frac{1}{p-1}}\,d\mu \, dt.
\]
Combining the above inequalities, we obtain
\begin{equation}\label{ET1}
\int_{Q} f(x) \, \xi \, d\mu \, dt
\le
C \left(K_1(\xi)+K_2(\xi)\right),
\end{equation}
where
\[
K_1(\xi)=
\int_{Q} (\sin r)^{-\frac{\alpha}{p-1}}
|\partial_t \xi|^{\frac{p}{p-1}}
\xi^{-\frac{1}{p-1}}\,d\mu \, dt,
\]
and
\[
K_2(\xi)=
\int_{Q} (\sin r)^{-\frac{\alpha}{p-1}}
|L_\lambda \xi|^{\frac{p}{p-1}}
\xi^{-\frac{1}{p-1}}\,d\mu \, dt,
\]
provided that \(K_i(\xi)<\infty\) for \(i=1,2\).

Let \(a_\lambda\in [0,\pi)\) be given by \eqref{alambda-def}, with \(\delta\in(a_\lambda,\pi)\).

\noindent\textbf{Case (a).} Assume that \(p\) satisfies \eqref{cd-nnex1}. For \(T,R,m\gg1\), let \(\xi\) be the test function given by \eqref{test1}. By Lemma~\ref{lem:test-admissible}, we have \(\xi\in\mathcal{X}\). Moreover,
\[
K_1(\xi)=
\left(\int_0^\infty
\upzeta_T(t)^{-\frac1{p-1}}
|\upzeta_T'(t)|^{\frac{p}{p-1}}\,dt\right)
\left(\int_{\Omega_\delta}
(\sin r)^{-\frac{\alpha}{p-1}}\psi_R(x)\,d\mu\right),
\]
and
\[
K_2(\xi)=
\left(\int_0^\infty \upzeta_T(t)\,dt\right)
\left(\int_{\Omega_\delta}
(\sin r)^{-\frac{\alpha}{p-1}}
\psi_R(x)^{-\frac1{p-1}}
\left|L_\lambda\psi_R(x)\right|^{\frac{p}{p-1}}
\,d\mu\right).
\]

Using the first estimate in Lemma~\ref{L3.4} and the second estimate in Lemma~\ref{L3.7}, we obtain
\begin{equation}\label{K1est-a}
K_1(\xi)
\le
CT^{-\frac1{p-1}}\ln R \left(
\ln R+
R^{\frac{\alpha}{p-1}-N-\mu_-}
\right).
\end{equation}
Moreover, by the first identity in Lemma~\ref{L3.7} and Lemma~\ref{L3.5},
\begin{equation}\label{K2est-a}
K_2(\xi)
\le
CT(\ln R)^{\frac{p}{p-1}}\,
R^{\frac{2p}{p-1}-N+\frac{\alpha}{p-1}-\mu_-}.
\end{equation}

On the other hand, since
\[
f\ge0,\qquad f\not\equiv0,
\]
and since, by Lemma~\ref{L3.1}, \(H>0\) in \(\Omega_\delta\), there exist
\[
\delta<r_1<r_2<\pi
\]
such that
\[
\int_{\{r_1<r<r_2\}} f(x)H(x)\,d\mu>0.
\]
Moreover, for \(R\gg1\), we have
\[
R(\pi-r)\ge1
\qquad \text{for } r_1\le r\le r_2.
\]
Hence,
\[
\psi^m(R(\pi-r))=1
\qquad \text{on } \{r_1\le r\le r_2\}.
\]
Therefore,
\[
\begin{aligned}
\int_Q f(x)\xi(t,x)\,d\mu\,dt
&=
\left(\int_0^\infty \upzeta_T(t)\,dt\right)
\left(\int_{\Omega_\delta} f(x)H(x)\psi^m(R(\pi-r))\,d\mu\right)\\
&\ge
\left(\int_0^\infty \upzeta_T(t)\,dt\right)
\left(\int_{\{r_1<r<r_2\}} f(x)H(x)\,d\mu\right).
\end{aligned}
\]
Using the first identity in Lemma~\ref{L3.7}, we obtain
\begin{equation}\label{intf-a}
\int_Q f(x)\xi(t,x)\,d\mu\,dt \ge CT.
\end{equation}

Combining \eqref{ET1}, \eqref{K1est-a}, \eqref{K2est-a}, and \eqref{intf-a}, we obtain
\[
T\le C\left[
T^{-\frac1{p-1}}\ln R \left(
\ln R+
R^{\frac{\alpha}{p-1}-N-\mu_-}
\right)
+
T(\ln R)^{\frac{p}{p-1}}\,
R^{\frac{2p}{p-1}-N+\frac{\alpha}{p-1}-\mu_-}
\right].
\]
Dividing by \(T\), we get
\[
1\le C\left[
T^{-\frac p{p-1}}\ln R \left(
\ln R+
R^{\frac{\alpha}{p-1}-N-\mu_-}
\right)
+
(\ln R)^{\frac{p}{p-1}}\,
R^{\frac{2p}{p-1}-N+\frac{\alpha}{p-1}-\mu_-}
\right].
\]
Choose \(T=R^\theta\), where
\[
\theta>\max\left\{0,\frac{\alpha-(N+\mu_-)(p-1)}{p}\right\}.
\]
Since
\[
N-2+\mu_-=\lambda_N,
\]
condition \eqref{cd-nnex1} is equivalent to
\[
p>1,
\qquad
\frac{2p}{p-1}-N+\frac{\alpha}{p-1}-\mu_-<0.
\]
Therefore, letting \(R\to\infty\), the right-hand side tends to \(0\), which is impossible. This contradiction proves the nonexistence assertion in Case~(a).

\noindent\textbf{Case (b).}
Let \(\alpha>-2\), and let \(p\) satisfy \eqref{cd-nnex1-crit}. We note that \eqref{cd-nnex1-crit} is equivalent to \eqref{cd-pcrit}. For \(T,R,m\gg1\), let \(\xi_{\mathrm{crit}}\) be the test function given by \eqref{test2}. By Lemma~\ref{lem:test-critical-admissible}, we have \(\xi_{\mathrm{crit}}\in\mathcal{X}\). Moreover, we have 
\[
K_1(\xi_{\mathrm{crit}})=
\left(\int_0^\infty
\upzeta_T(t)^{-\frac1{p-1}}
|\upzeta_T'(t)|^{\frac{p}{p-1}}\,dt\right)
\left(\int_{\Omega_\delta}
(\sin r)^{-\frac{\alpha}{p-1}}\eta_R(x)\,d\mu\right),
\]
and
\[
K_2(\xi_{\mathrm{crit}})=
\left(\int_0^\infty \upzeta_T(t)\,dt\right)
\left(\int_{\Omega_\delta}
(\sin r)^{-\frac{\alpha}{p-1}}
\eta_R(x)^{-\frac1{p-1}}
\left|L_\lambda\eta_R(x)\right|^{\frac{p}{p-1}}
\,d\mu\right).
\]

Using the second estimate in Lemma~\ref{L3.4} and the second estimate in Lemma~\ref{L3.7}, we obtain
\begin{equation}\label{est-K1-crit-OK}
K_1(\xi_{\mathrm{crit}})
\le
CT^{-\frac1{p-1}}\ln R \left(
\ln R+
R^{\frac{\alpha}{p-1}-N-\mu_-}
\right).
\end{equation}
Moreover, by Lemma~\ref{L3.6} and the first identity in Lemma~\ref{L3.7}, we obtain
\begin{equation}\label{est-K2-crit-OK}
K_2(\xi_{\mathrm{crit}})
\le
CT(\ln R)^{k_\lambda-\frac{p}{p-1}},
\end{equation}
where
\[
k_\lambda=
\begin{cases}
1, & \text{if } 0<\lambda<\lambda^*,\\
2, & \text{if } \lambda=\lambda^*.
\end{cases}
\]

On the other hand, as in Case~(a), using the fact that the logarithmic cutoff is equal to \(1\) on every fixed annulus \(r_1\le r\le r_2<\pi\) for \(R\gg1\), we obtain
\begin{equation}\label{intf-b-crit}
\int_Q f(x)\xi_{\mathrm{crit}}(t,x)\,d\mu\,dt \ge CT.
\end{equation}

Combining \eqref{ET1} with \(\xi=\xi_{\mathrm{crit}}\), together with
\eqref{est-K1-crit-OK}, \eqref{est-K2-crit-OK}, and \eqref{intf-b-crit}, we obtain
\[
1 \le C\left[
T^{-\frac p{p-1}}\ln R \left(
\ln R+
R^{\frac{\alpha}{p-1}-N-\mu_-}
\right)
+
(\ln R)^{k_\lambda-\frac{p}{p-1}}
\right].
\]
Under assumption \eqref{cd-lambda-alpha}, we have
\[
k_\lambda-\frac{p}{p-1}<0.
\]
We now take \(T=R^\theta\), with \(\theta>0\) sufficiently large, and let \(R\to\infty\). The right-hand side tends to \(0\), which is impossible. This contradiction proves Case~(b).

This completes the proof of part~(i).

\noindent\textbf{Part (ii).}
Assume now that \eqref{cd-ex1} holds. We consider separately the cases
\(0<\lambda<\lambda^*\) and \(\lambda=\lambda^*\).

\noindent\textbf{Case 1.} \(0<\lambda<\lambda^*\). Since \eqref{cd-ex1} holds and
\[
\lambda_N=-\mu_+,
\]
we have
\[
1<p<1+\frac{\alpha+2}{-\mu_+}.
\]
Equivalently,
\[
-\frac{\alpha+2}{p-1}<\mu_+.
\]
Since \(\mu_-<\mu_+\), we may choose \(q\) such that
\begin{equation}\label{choice-q-ex}
\max\left\{\mu_-,-\frac{\alpha+2}{p-1}\right\}<q<\mu_+.
\end{equation}

Let
\[
G_q(x)=(\sin r)^q,
\qquad 0<r=d(o,x)<\pi.
\]
Since \(r=d(o,x)\) is smooth away from \(o\) and its antipodal point, we have
\(G_q\in C^\infty(\Omega_\delta)\). A direct computation gives, for every
\(x\in\Omega_\delta\),
\[
\Delta_{\mathbb S^N}G_q(x)
=
\left[
\frac{q(q+N-2)}{\sin^2 r}
-
q(q+N-1)
\right]G_q(x).
\]
Therefore,
\[
-\Delta_{\mathbb S^N}G_q(x)
-\frac{\lambda}{\sin^2 r}G_q(x)
=
\left[
\frac{A_q}{\sin^2 r}
+
q(q+N-1)
\right]G_q(x),
\]
where
\[
A_q=-(q(q+N-2)+\lambda).
\]
Since \(q\in(\mu_-,\mu_+)\), we have \(A_q>0\). Consequently, there exists
\(b=b_q\in[0,\pi)\) such that
\[
\frac{A_q}{\sin^2 r}
+
q(q+N-1)>0
\qquad \text{for } b<r<\pi.
\]
We now take
\[
\delta\in(b,\pi).
\]
Then, for every \(x\in\Omega_\delta\),
\[
-\Delta_{\mathbb S^N}G_q(x)
-\frac{\lambda}{\sin^2 r}G_q(x)>0.
\]
Moreover, as \(r\to\pi^-\),
\[
-\Delta_{\mathbb S^N}G_q(x)
-\frac{\lambda}{\sin^2 r}G_q(x)
\asymp
(\sin r)^{q-2}.
\]
On the other hand,
\[
(\sin r)^\alpha G_q(x)^p
=
(\sin r)^{\alpha+pq}.
\]
By \eqref{choice-q-ex},
\[
q>-\frac{\alpha+2}{p-1},
\]
which is equivalent to
\[
q-2<\alpha+pq.
\]
Thus,
\[
\frac{(\sin r)^\alpha G_q(x)^p}
{-\Delta_{\mathbb S^N}G_q(x)-\dfrac{\lambda}{\sin^2 r}G_q(x)}
\to0^+
\qquad \text{as } r\to\pi^-.
\]
Since the denominator is positive in \(\Omega_\delta\), it follows that
\[
M_q
=
\sup_{x\in\Omega_\delta}
\frac{(\sin r)^\alpha G_q(x)^p}
{-\Delta_{\mathbb S^N}G_q(x)-\dfrac{\lambda}{\sin^2 r}G_q(x)}
<\infty.
\]

Choose \(\varepsilon>0\) so small that
\[
\varepsilon^{p-1}M_q\le \frac12.
\]
For \(x\in\overline{\Omega_\delta}^{\,\pi}\), define
\[
U_\varepsilon(x)=\varepsilon G_q(x)
=
\varepsilon(\sin r)^q.
\]
Then
\[
U_\varepsilon\in C^\infty(\Omega_\delta)\cap C(\overline{\Omega_\delta}^{\,\pi}),
\qquad
U_\varepsilon>0 \quad \text{in } \overline{\Omega_\delta}^{\,\pi}.
\]
Moreover, for every \(x\in\Omega_\delta\),
\[
\begin{aligned}
&-\Delta_{\mathbb S^N}U_\varepsilon(x)
-\frac{\lambda}{\sin^2 r}U_\varepsilon(x)
-(\sin r)^\alpha U_\varepsilon(x)^p \\
&=
\varepsilon\left(
-\Delta_{\mathbb S^N}G_q(x)
-\frac{\lambda}{\sin^2 r}G_q(x)
\right)
-\varepsilon^p(\sin r)^\alpha G_q(x)^p \\
&\ge
\frac{\varepsilon}{2}
\left(
-\Delta_{\mathbb S^N}G_q(x)
-\frac{\lambda}{\sin^2 r}G_q(x)
\right)>0.
\end{aligned}
\]
For \(x\in\Omega_\delta\), set
\[
F_\varepsilon(x)
=
-\Delta_{\mathbb S^N}U_\varepsilon(x)
-\frac{\lambda}{\sin^2 r}U_\varepsilon(x)
-(\sin r)^\alpha U_\varepsilon(x)^p.
\]
Then
\[
F_\varepsilon>0
\qquad \text{in } \Omega_\delta.
\]
We choose
\[
f_\varepsilon(x)=\frac12F_\varepsilon(x),
\qquad x\in\Omega_\delta.
\]
Thus, \(f_\varepsilon>0\) in \(\Omega_\delta\), and, for every \(x\in\Omega_\delta\),
\[
-\Delta_{\mathbb S^N}U_\varepsilon(x)
-\frac{\lambda}{\sin^2 r}U_\varepsilon(x)
\ge
(\sin r)^\alpha U_\varepsilon(x)^p+f_\varepsilon(x).
\]
Since \(r=\pi\) is not included in \(\overline{\Omega_\delta}^{\,\pi}\), and since \(F_\varepsilon\) is smooth for \(\delta<r<\pi\) and continuous up to \(r=\delta\), we have
\[
f_\varepsilon\in C(\overline{\Omega_\delta}^{\,\pi}).
\]
Moreover, by the choice of \(\delta>b\), \(f_\varepsilon>0\) on \(\overline{\Omega_\delta}^{\,\pi}\).

We now construct a time-dependent solution. Let
\[
a(t)=1-\frac12e^{-t},
\qquad t>0.
\]
Then
\begin{equation}\label{aff}
\frac12<a(t)<1,
\qquad
a'(t)>0,
\qquad t>0.
\end{equation}
Define
\[
u(t,x)=a(t)U_\varepsilon(x),
\qquad (t,x)\in Q.
\]
Since \(a\in C^\infty(0,\infty)\) and
\[
U_\varepsilon\in C^\infty(\Omega_\delta)\cap C(\overline{\Omega_\delta}^{\,\pi}),
\]
we have
\[
u\in C^\infty((0,\infty)\times\Omega_\delta)\cap C(Q).
\]
Furthermore,
\[
u\ge0
\qquad \text{on } (0,\infty)\times\Gamma_\delta.
\]

Finally, using, for every \(x\in\Omega_\delta\),
\[
-\Delta_{\mathbb S^N}U_\varepsilon(x)
-\frac{\lambda}{\sin^2 r}U_\varepsilon(x)
=
(\sin r)^\alpha U_\varepsilon(x)^p+F_\varepsilon(x),
\]
and \eqref{aff}, we obtain, for every \((t,x)\in(0,\infty)\times\Omega_\delta\),
\[
\begin{aligned}
&\partial_tu(t,x)-\Delta_{\mathbb S^N}u(t,x)
-\frac{\lambda}{\sin^2 r}u(t,x)
-(\sin r)^\alpha u(t,x)^p-f_\varepsilon(x) \\
&=
a'(t)U_\varepsilon(x)
+
\bigl(a(t)-a(t)^p\bigr)(\sin r)^\alpha U_\varepsilon(x)^p
+
\left(a(t)-\frac12\right)F_\varepsilon(x)\\
&\ge 0.
\end{aligned}
\]
Therefore,
\[
\partial_tu-\Delta_{\mathbb S^N}u-\frac{\lambda}{\sin^2 r}u
\ge
(\sin r)^\alpha u^p+f_\varepsilon
\qquad \text{in } (0,\infty)\times\Omega_\delta.
\]
Hence, \(u\) is a classical solution of \eqref{P} corresponding to the positive source \(f_\varepsilon\).

\noindent\textbf{Case 2.} \(\lambda=\lambda^*\). Choose
\[
0<\gamma<1.
\]
We define
\[
G_\gamma(x)=(\sin r)^{-\lambda_N}(-\ln(\sin r))^\gamma,
\qquad \frac{\pi}{2}<r=d(o,x)<\pi.
\]
Then \(G_\gamma>0\) and \(G_\gamma\in C^\infty\) on
\[
\left\{x\in\mathbb S^N: \frac{\pi}{2}<r=d(o,x)<\pi\right\}.
\]
A direct asymptotic computation gives, as \(r=d(o,x)\to\pi^-\),
\begin{equation}\label{Lg-asymp}
-\Delta_{\mathbb S^N}G_\gamma(x)
-\frac{\lambda^*}{\sin^2 r}G_\gamma(x)
\sim
\gamma(1-\gamma)(\sin r)^{-\lambda_N-2}
(-\ln(\sin r))^{\gamma-2}.
\end{equation}
Since \(0<\gamma<1\), the right-hand side in \eqref{Lg-asymp} is positive. Hence, there exists \(b=b_\gamma\in[\pi/2,\pi)\) such that
\[
-\Delta_{\mathbb S^N}G_\gamma(x)
-\frac{\lambda^*}{\sin^2 r}G_\gamma(x)>0
\]
for every \(x\in\mathbb S^N\) satisfying
\[
b<r=d(o,x)<\pi.
\]

We now take
\[
\delta\in(b,\pi).
\]
Then the above positivity holds for every \(x\in\Omega_\delta\). Moreover, for \(x\in\Omega_\delta\),
\[
(\sin r)^\alpha G_\gamma(x)^p
=
(\sin r)^{\alpha-\lambda_N p}
(-\ln(\sin r))^{\gamma p}.
\]
By \eqref{cd-ex1}, we have
\[
-\lambda_N-2<\alpha-\lambda_N p.
\]
Therefore, using \eqref{Lg-asymp},
\[
\frac{(\sin r)^\alpha G_\gamma(x)^p}
{-\Delta_{\mathbb S^N}G_\gamma(x)-\dfrac{\lambda^*}{\sin^2 r}G_\gamma(x)}
\to0^+
\qquad \text{as } r=d(o,x)\to\pi^-.
\]
Since the denominator is positive in \(\Omega_\delta\), it follows that
\[
M_\gamma
=
\sup_{x\in\Omega_\delta}
\frac{(\sin r)^\alpha G_\gamma(x)^p}
{-\Delta_{\mathbb S^N}G_\gamma(x)-\dfrac{\lambda^*}{\sin^2 r}G_\gamma(x)}
<\infty.
\]

Choose \(\varepsilon>0\) so small that
\[
\varepsilon^{p-1}M_\gamma\le \frac12.
\]
For \(x\in \overline{\Omega_\delta}^{\,\pi}\), define
\[
U_\varepsilon(x)=\varepsilon G_\gamma(x).
\]
Then
\[
U_\varepsilon\in C^\infty(\Omega_\delta)\cap C(\overline{\Omega_\delta}^{\,\pi}),
\qquad
U_\varepsilon>0
\quad \text{in } \overline{\Omega_\delta}^{\,\pi}.
\]
Moreover, for every \(x\in\Omega_\delta\),
\[
\begin{aligned}
&-\Delta_{\mathbb S^N}U_\varepsilon(x)
-\frac{\lambda^*}{\sin^2 r}U_\varepsilon(x)
-(\sin r)^\alpha U_\varepsilon(x)^p\\
&=
\varepsilon\left(
-\Delta_{\mathbb S^N}G_\gamma(x)
-\frac{\lambda^*}{\sin^2 r}G_\gamma(x)
\right)
-\varepsilon^p(\sin r)^\alpha G_\gamma(x)^p\\
&\ge
\frac{\varepsilon}{2}
\left(
-\Delta_{\mathbb S^N}G_\gamma(x)
-\frac{\lambda^*}{\sin^2 r}G_\gamma(x)
\right)>0.
\end{aligned}
\]
For \(x\in\Omega_\delta\), set
\[
F_\varepsilon(x)
=
-\Delta_{\mathbb S^N}U_\varepsilon(x)
-\frac{\lambda^*}{\sin^2 r}U_\varepsilon(x)
-(\sin r)^\alpha U_\varepsilon(x)^p.
\]
Then
\[
F_\varepsilon>0
\qquad \text{in } \Omega_\delta.
\]
We choose
\[
f_\varepsilon(x)=\frac12F_\varepsilon(x),
\qquad x\in\Omega_\delta.
\]
Thus,
\[
f_\varepsilon\in C(\overline{\Omega_\delta}^{\,\pi}),
\qquad
f_\varepsilon>0 \quad \text{in } \Omega_\delta,
\]
and
\[
-\Delta_{\mathbb S^N}U_\varepsilon(x)
-\frac{\lambda^*}{\sin^2 r}U_\varepsilon(x)
\ge
(\sin r)^\alpha U_\varepsilon(x)^p+f_\varepsilon(x)
\]
for every \(x\in\Omega_\delta\).

Finally, we use the same time factor as in Case 1. Namely, let
\[
a(t)=1-\frac12 e^{-t},\qquad t>0,
\]
and define
\[
u(t,x)=a(t)U_\varepsilon(x),
\qquad (t,x)\in Q.
\]
Then
\[
u\in C^\infty((0,\infty)\times\Omega_\delta)\cap C(Q),
\qquad
u\ge0 \quad \text{on } (0,\infty)\times\Gamma_\delta,
\]
and
\[
\partial_tu-\Delta_{\mathbb S^N}u-\frac{\lambda^*}{\sin^2 r}u
\ge
(\sin r)^\alpha u^p+f_\varepsilon
\qquad \text{in } (0,\infty)\times\Omega_\delta.
\]
Hence, \(u\) is a classical solution of \eqref{P} corresponding to the positive source 
\(f_\varepsilon\in C(\overline{\Omega_\delta}^{\,\pi})\).

This completes the proof of part (ii), and hence the proof of Theorem~\ref{T1.2}.
\end{proof}

\section{Further remarks and open questions}\label{sec5}

We conclude the paper with a few remarks and open questions concerning the scope of Theorem~\ref{T1.2}.

\section{Further remarks and open questions}\label{sec5}

We conclude the paper with a few remarks and open questions concerning the scope of Theorem~\ref{T1.2}.

First, throughout the proof of Theorem~\ref{T1.2}, the parameter \(\delta\) is assumed to be sufficiently close to \(\pi\). This restriction appears naturally in both the nonexistence and existence parts of the proof. In the nonexistence argument, it is used to construct a positive radial Hardy barrier \(H\), obtained from a singular solution near the antipodal point \(r=\pi\). The positivity of this barrier is guaranteed only in a left-neighborhood of \(\pi\), and this is essential for the construction of admissible test functions.

The same type of restriction also appears in the existence part. There, the explicit stationary supersolutions are constructed from singular radial profiles \(G_q\) in the case \(0<\lambda<\lambda^*\), and \(G_\gamma\) in the case \(\lambda=\lambda^*\). The required inequality
\[
-\Delta_{\mathbb S^N}G-\frac{\lambda}{\sin^2 r}G>0
\]
is obtained from the asymptotic behavior of these profiles near \(r=\pi\). Hence, it is guaranteed only in a sufficiently small neighborhood of the antipodal singular point. This forces \(\delta\) to be chosen sufficiently close to \(\pi\) in the construction of solutions as well.

At this stage, it is not clear whether the restriction \(\delta\) close to \(\pi\) is merely technical or whether it reflects a genuine feature of the problem. On the one hand, the critical mechanism identified here is localized near the singular point \(r=\pi\), where the potential \(\lambda/\sin^2 r\) dominates. On the other hand, when the domain extends farther away from the singular point, the global behavior of the associated radial Hardy equation may influence the existence and nonexistence theory. It would therefore be interesting to determine whether the same critical picture remains valid for arbitrary \(\delta\in(0,\pi)\), or whether additional phenomena may occur.

A second open question concerns the critical Hardy case. For \(\alpha>-2\), the critical exponent obtained in this paper is
\[
p_{\mathrm{crit}}
=
1+\frac{\alpha+2}{\lambda_N}.
\]
When \(0<\lambda<\lambda^*\), our nonexistence result covers the critical case
\[
p=p_{\mathrm{crit}}.
\]
When \(\lambda=\lambda^*\), however, the critical nonexistence result is proved only under the additional restriction
\[
-2<\alpha<\frac{N-6}{2}.
\]
Thus, the case
\[
\lambda=\lambda^*,
\qquad
\alpha\ge \frac{N-6}{2},
\qquad
p=p_{\mathrm{crit}},
\]
is not covered by Theorem~\ref{T1.2}. The obstruction comes from the logarithmic behavior of the Hardy barrier in the critical case. More precisely, the logarithmic gain produced by the cutoff is compensated by a logarithmic loss in the estimates, and our method no longer yields a vanishing upper bound in this range.

It remains open whether nonexistence still holds in this remaining critical regime, or whether one can construct positive solutions for some positive sources. Resolving this question would require either sharper logarithmic test-function estimates or a different construction adapted to the double-root structure of the critical Hardy operator.

These questions indicate that the present results give a sharp critical picture within the range accessible by the radial Hardy-barrier method, while leaving open the possible influence of global geometry and critical logarithmic effects.

\section*{Funding}

The first author is supported by Ongoing Research Funding Program, (ORF-2026-57), King Saud University, Riyadh, Saudi Arabia.

\section*{Declaration of competing interest}
The authors declare that there is no conflict of interest.

\section*{Data Availability Statements}
The manuscript has no associated data

\end{document}